\documentclass[11pt, a4paper]{article}
\usepackage[utf8]{inputenc}
\usepackage[margin=1.1in]{geometry}
\usepackage{graphicx}
\usepackage{amsmath, amssymb}
\usepackage{amsthm}
\usepackage{booktabs}
\usepackage[numbers,sort&compress]{natbib}
\usepackage{hyperref}
\usepackage{titlesec}
\usepackage{enumitem}
\usepackage{xcolor}
\usepackage{bm,mathrsfs}

\newtheorem{theorem}{Theorem}[section]

\newtheorem{lemma}[theorem]{Lemma}

\newtheorem{remark}[theorem]{Remark}
\newtheorem{definition}[theorem]{Definition}

\def\u{{\bf u}}
\def\vv{{\bf v}}
\def\w{{\bf w}}
\def\n{{\bf n}}
\def\f{{\bf f}}
\def\h{{\bf h}}
\def\e{{\bm{\eta}}}
\def\si{{\bm{\sigma}}}
\def\H{{\bf H}}

\def\x{{\bf x}}

\def\H{{\bf H}}

\def\L{{\bf L}}
\def\d{{\mathrm d}}
\def\X{{\mathrm X}}

\def\ue{{\mu_1}}
\def\ur{{\mu_2}}

\newcommand{\D}{\mathbb{D}}

\newcommand{\bbeta}{\boldsymbol{\beta}}

\usepackage[textsize=tiny,color=blue!50!white]{todonotes}

\titleformat{\section}{\large\bfseries}{\thesection}{1em}{}
\titleformat{\subsection}{\bfseries}{\thesubsection}{1em}{}
\setlist[itemize]{noitemsep, topsep=0pt}

\title{\Large\textbf{Weighted $\H^2$ regularity of fluid-structure interaction\\ when the interface intersects the boundary}}
\author{
    Zhaonan Dong\textsuperscript{1}, 
    \,\,Tiantian Huang\textsuperscript{2}, 
    \,\,Buyang Li\textsuperscript{3}
}

\date{}

\begin{document}
\maketitle
\begin{center}
\small
$^{1}$ Inria Paris, 48 rue Barrault, 75647 Paris, France\\
$^{1}$ CERMICS, École nationale des ponts et chaussées,
IP Paris, 77455 Marne-la-Vall\'ee, France\\
$^{2}$Department of Applied Mathematics, The Hong Kong Polytechnic University,
Hong Kong\\  
$^{3}$Department of Applied Mathematics, The Hong Kong Polytechnic University,
Hong Kong\\
\end{center}
\begin{abstract}
This paper analyzes the regularity of solutions to a fluid-structure interaction (FSI) problem involving a Stokesian fluid and a linear elastic solid in a two-dimensional polygonal domain, seperated by an interface which intersects the boundary of the domain. The main challenges arise from the limited solution regularity caused by geometric singularities at the domain corners. To address this, we introduce tailored weighted Sobolev spaces, denoted by $\mathbf{H}^{k,l}_{\boldsymbol{\beta}}$, and a novel solution operator framework on these spaces. This framework provides the key insight for decoupling the coupled FSI system into tractable fluid and elastic solid subproblems in the regularity analysis. As our main result, we prove the existence and uniqueness of a solution  in the weighted Sobolev space $\mathbf{H}^{2,2}_{\boldsymbol{\beta}}$.
\end{abstract}

\section{Introduction}\label{sec:introduction}
Fluid-structure interaction (FSI) describes the multiphysics phenomenon wherein a fluid flow and a deformable structure exert mutual influence on each other. The coupling is bidirectional: the fluid's pressure and shear stress load the structure, inducing its deformation or motion, which subsequently modifies the fluid domain and flow field. Such interactions arise in a wide range of applications, including biomechanics, aeroelasticity, and engineering design; see, for instance, \cite{Bodnr2014FluidstructureIA,chakrabarti2002theory,Buka2019FluidstructureIB,KAMAKOTI2004535,wang2022mathematical,richter2017fluid}. As a result, FSI problems have been the subject of extensive analytical and numerical investigation.

A substantial body of literature is devoted to the well-posedness and regularity of solutions to FSI systems. Early analytical studies primarily focused on FSI models formulated on a fixed reference domain  \cite{avalos2008higher,avalos2007coupled,avalos2008uniform,avalos2009semigroup,avalos2013fluid,barbu2007existence,barbu2008smoothness,kukavica2010strong,Kukavica2009StrongST,zhang2007long}. Such models provide accurate approximations when structural displacements and deformations remain small. When the structure undergoes large displacements, however, the resulting deformation has a significant impact on the fluid dynamics, and the fluid domain becomes time-dependent and fully coupled to the structural motion. This leads to geometrically nonlinear FSI models, which have been extensively studied; see, for example, \cite{Muha2012ExistenceOA,Ignatova2012OnWF,Ignatova2014OnWA,Kukavica2011SOLUTIONSTA,Muha2013AN3,Muha2013ExistenceOA,Muha2015FluidstructureIB,Muha2015ExistenceOA,Kampschulte2022UnrestrictedDO,Muha2019ExistenceAR,Schwarzacher2020WeakStrongUF}. 
The geometric nonlinearities induced by the moving fluid domain substantially complicate the analysis of well-posedness and regularity when compared with FSI models posed on fixed domains. In the present work, we consider a two-dimensional FSI system consisting of a Stokes fluid coupled with a linearly elastic solid, posed on a fixed domain.

The coupled parabolic-hyperbolic system formed by the Stokes and Lamé equations has been extensively studied, and a substantial body of literature is devoted to establishing well-posedness and regularity properties of its solutions. Existing results may be broadly classified according to the geometric setting and the type of coupling. Throughout this discussion, $\mathbf{u}$ and $p$ denote the fluid velocity and pressure, respectively, while $\boldsymbol{\eta}$ represents the solid displacement.
    \begin{itemize}
        \item \textbf{Lipschitz domains} (where the interface may intersect the boundary).  The existence and regularity of weak solutions for the Stokes-Lam\'e systems in Lipschitz domains were studied in \cite{du2004semidiscrete}. For initial data $(\e_0,\e _1,\u_0)\in \H^1\times \H^1\times \H^1$, the existence and uniqueness of a weak solution  $(\e,\e_t ,\u)\in \H^1\times \L^2\times \L^2$ with $\u \in L^2(0,T;\H^1)$ was established in \cite[Theorem 2.2]{du2004semidiscrete}. For smoother initial data $(\e_0,\e _1,\u_0)\in \H^2\times \H^1\times \H^2$ and an associated initial pressure $p_0\in H^1$, improved regularity $(\e,\e_t ,\u)\in\H^1\times \H^1\times \L^2$ was obtained in \cite[Theorem 2.3]{du2004semidiscrete}.\medskip
        
        \item \textbf{Smooth domains} (where the interface does not intersects the boundary). For FSI systems posed on smooth domains with non-intersecting interfaces, a series of regularity results was established in \cite{avalos2008higher,avalos2007coupled,avalos2008uniform,avalos2009semigroup,avalos2013fluid}. In particular, for the Stokes-Lam\'e coupling system,  initial data $(\e_0,\e _1,\u_0)\in  \H^1\times \L^2\times \L^2$ yield a strong solution $(\e,\e_t ,\u)\in \H^1\times \L^2\times \L^2$ with $\u \in L^2(0,T;\H^1)$, while initial data $(\e_0,\e _1,\u_0)\in  \H^1\times \H^1\times \H^1$ imply $(\e,\e_t ,\u)\in \H^1\times \H^1\times \H^1$; see \cite[Theorem 4.1]{avalos2009semigroup}. For the  Navier-Stokes-wave equation coupling system, higher-regularity initial data $(\e_0,\e _1,\u_0)\in \H^2\times \H^1\times \H^1$ yield a strong solution $(\e,\e_t,\u)\in \H^2\times \H^1\times \H^1$ with $\u \in L^2(0,T;\H^2)$; see \cite[Theorem 2.1]{avalos2008higher}. The analysis in \cite{avalos2013fluid} extends these results to FSI models within the fundamental $\H^1 \times \L^2 \times \L^2$ framework.\medskip
        
        \item \textbf{Other settings.} Fluid–structure interaction models coupling the Navier-Stokes or heat equations with dynamic elasticity--typically in configurations where the interface does not intersect the boundary--were studied in  \cite{barbu2007existence,barbu2008smoothness,kukavica2010strong,Kukavica2009StrongST}. 
        In particuloar, local-in-time existence of strong solutions $(\e,\e_t,\u)\in \H^2\times \H^1\times \H^1$ for initial data $(\e_0,\e_1,\u_0)\in \H^2\times \H^1\times \H^2$ was proved in \cite{barbu2008smoothness}. The long-time existence of strong solutions $(\e,\e_t,\u)\in \H^2\times \H^1\times \H^2$ for initial data $(\e_0,\e_1,\u_0)\in \H^2\times \H^1\times \H^2$ was shown in \cite[Theorem 3]{zhang2007long}. 
    Moreover, the local-in-time existence of solutions $(\e,\e_t,\u)\in \H^{3/2+k}\times \H^{1/2+k}\times \H^1$ for initial data $(\e_0,\e_1,\u_0)\in \H^{3/2+k}\times \H^{1/2+k}\times \H^1$, with $0<k<(\sqrt{2}-1)/2$, was shown in \cite[Theorem 2.1]{kukavica2010strong}. 
 
    \end{itemize}


Despite these advances, the analysis of FSI problems in the $\H^2\times \H^1\times \H^2$ framework has largely been restricted to smooth domains, flat geometries \cite{Kukavica2009StrongST,kukavica2010strong}, or configurations with $C^2$ boundaries \cite{zhang2007long}, where the fluid-structure interface does not intersect the boundary. This geometric assumption ensures the $\H^2$-regularity of $\boldsymbol{\eta}$. This level of regularity---or, in the presence of geometric singularities, an appropriate weighted $\H^2$ regularity---holds not merely theoretical value: it is crucial for the design of finite element methods used in practical FSI computations, as it underpins optimal approximation properties and a priori error estimates, and it guides the choice of stabilization mechanisms and mesh refinement strategies (e.g., graded meshes near singular points) needed to recover accuracy and robustness. 
In particular, a significant gap remains in the analysis of regularity of solutions in nonsmooth domains where fluid-structure interfaces intersect the boundary, leading to corner or edge singularities at the intersection points. 
Consequently, further investigation of complex geometrical configurations is required, especially near corners and edges of polygonal and polyhedral domains, where solution behaviour, (weighted) $\H^2$ regularity, and their implications for reliable finite element discretisations remain poorly understood.

  In polygonal or polyhedral domains, geometric singularities such as corners and edges generally lead to a loss of classical Sobolev regularity. 
  
\medskip
\noindent
\textbf{Corner singularities and existing theory.}
Corner singularities for elliptic systems in
polygonal domains have been studied in depth over the past decades \cite{doi:10.1137/120867937, dauge2006elliptic,doi:10.1137/1.9781611972030,li2020explicit,rossle2000corner}. For single equations or uncoupled systems, such as the Stokes equations or linear elasticity, solutions generally fail to belong to $\H^2$ near corner points. Instead, they admit local asymptotic expansions determined by the local geometry and boundary conditions (see \cite{li2020explicit,rossle2000corner}): 

            \begin{equation}\label{corner_singularity:single_pde}
                \mathbf{u} = \mathbf{u}_{\text{reg}} + \sum_{j,k} (\ln r)^k r^{\lambda_j} \mathbf{U}_{j,k}(\theta)
            \end{equation}
            where:
            \begin{itemize}
                \item $r,\theta$: Polar coordinates at the corner
                \item $\lambda_j$: Singularity exponents ($\operatorname{Re}(\lambda_j) >0$), depending on the
opening angle and the imposed boundary conditions 
                \item $\mathbf{u}_{\text{reg}} \in \H^2$: Regular part
            \end{itemize}


\medskip
\noindent
While this theory provides a precise description of classical corner singularities, it is inherently limited to single-domain or uncoupled systems. In FSI problems, new singularities arise at the interface-boundary intersection points, where the fluid-structure interface meets the fixed external boundary. At such points, the governing equations, boundary conditions, and interface conditions change simultaneously. The resulting singular behavior cannot be attributed to either the fluid or the solid alone; it is induced by the coupling itself. Consequently, classical $\H^2$ theory is inapplicable, and any second-order regularity estimate must be derived for the fully coupled system.


\medskip
\noindent
    \textbf{Mathematic Model.} In this paper, we study a two-dimensional FSI system consisting of a Stokes fluid coupled with a linear elastic solid in a polygonal domain $\Omega= \text{int}(\overline{\Omega}_f \cup \overline{\Omega}_s)$. The fluid and solid occupy subdomains  $\Omega_f$ and $\Omega_s$, respectively,  separated by an interface $\Gamma = \partial \Omega_f \cap \partial \Omega_s$. The remaining external boundaries are denoted by $\Gamma_f = \partial \Omega_f \setminus \Gamma$ for the fluid and $\Gamma_s = \partial \Omega_s \setminus \Gamma$ for the solid; see Figure~\ref{fig:FSI_polygonal_domain}.

        \begin{figure}
\centering
\includegraphics[width=0.57\linewidth]{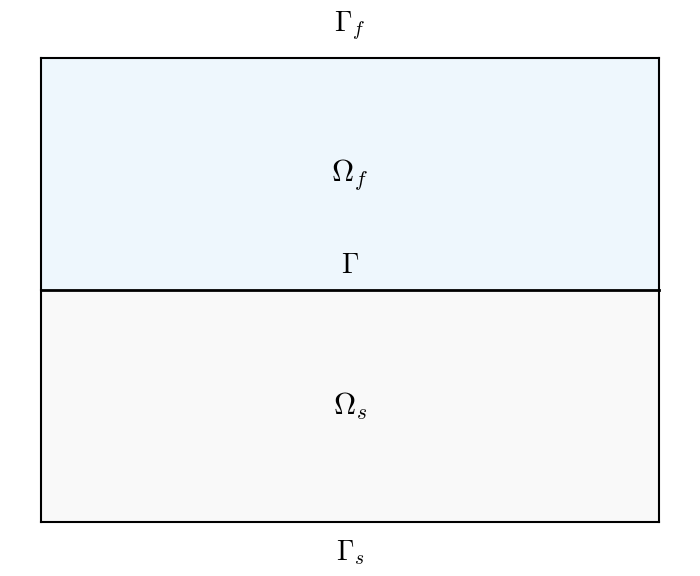}
\caption{\label{fig:FSI_polygonal_domain}Geometry of the fluid-structure interaction problem: fluid domain $\Omega_f$,  solid domain $\Omega_s$, and  interface $\Gamma$.}
\end{figure}
	
		Over a fixed time interval $(0, T]$, the coupled system is governed by the following equations:
	\begin{equation}\label{eq:stoke_elasticity_t}
		\begin{cases}
			\partial_t \u-\nabla\cdot (2\mathbb{D}(\mathbf{u}) - p \mathbb{I})=\f &\text{in}\,\,\Omega_f\times (0,T],\\
			\nabla\cdot \mathbf{u}=0 &\text{in}\,\,\Omega_f\times (0,T],\\
			\partial_{tt}\e-\nabla\cdot (2\mu_1\mathbb{D}(\e ) +\mu_2(\nabla\cdot \e ) \mathbb{I})=\mathbf{h}&\text{in} \,\,
			\Omega_s\times (0,T],
		\end{cases}
	\end{equation}
where the fluid stress tensor is $\si_f(\mathbf{u}, p) = 2 \mathbb{D}(\mathbf{u}) - p\mathbb{I}$, the solid stress tensor is $\si_s(\e) = 2\mu_1\mathbb{D}(\e ) +\mu_2(\nabla\cdot \e ) \mathbb{I}$, and $\mathbb{D}(\mathbf{v}) = \frac{1}{2}(\nabla \mathbf{v} + (\nabla \mathbf{v})^{\rm T})$ denotes the symmetric gradient. In these equations, $\mathbf{u}$ and $p$ represent the fluid velocity and pressure, while $\bm{\eta}$ is the solid displacement. The system is driven by the body force per unit mass $\mathbf{f}$ in the fluid and $\mathbf{h}$ in the solid, and the solid's elastic response is characterized by the positive Lam\'{e} parameters $\mu_1$ and $\mu_2$.
    
	The following interface conditions couple the subproblems on $\Gamma \times (0,T]$:
	\begin{equation}\label{con:interface}
		\begin{cases}
\partial_t \e  = \mathbf{u} & \text{(kinematic condition)}, \\
\si_f(\mathbf{u}, p) \mathbf{n} = \si_s(\e ) \mathbf{n} & \text{(traction condition)},
\end{cases}
	\end{equation}
	where $\mathbf{n}$ is the unit outward normal to $\Omega_f$ on $\Gamma$.

    Boundary and initial conditions complete the problem:\begin{equation}\label{con:boundary}
		\begin{cases}
			\mathbf{u}=\u_{in}&\text{on}\,\,\Gamma_{f}\times (0,T],\\
			\si_s(\e ) \mathbf{n}=\mathbf{0}&\text{on}\,\,\Gamma_s\times (0,T],
		\end{cases}
	\end{equation}
 and
	\begin{equation}\label{con:initial}
		\begin{cases}
			\u=\u_0&\text{on}\,\,\Omega_f\times \{t=0\},\\
			\e=\e_0&\text{on}\,\,\Omega_s\times \{t=0\},\\
			\partial_t\e=\e_1&\text{on}\,\,\Omega_s\times \{t=0\}.\\
		\end{cases}
	\end{equation}

    A distinctive feature of this configuration is that the interface $\Gamma$ has endpoints, where the fluid-structure
interface $\Gamma$ intersects the fixed external boundary of the domain.  At each endpoint $\x_0\in \overline{\Gamma}\cap\partial\Omega$,  the interface meets the external boundary and the type of boundary condition changes abruptly. At such points, the following conditions must be satisfied simultaneously:
    \begin{itemize}
        \item [(i)] prescribed boundary data on $\Gamma_f$ and $\Gamma_s$,
        \begin{equation*}
            \u(\x_0)=\u _{in}(\x_0),\quad \si_s(\e(\x_0) ) \mathbf{n}=\mathbf{0};
        \end{equation*}
        \item [(ii)] velocity continuity across the interface $\Gamma$,
        \begin{equation*}
            \u (\x_0)=\partial_t \e(\x_0);
        \end{equation*}
        \item [(iii)] stress continuity across $\Gamma$,
        \begin{equation*}
            \si_f(\mathbf{u}(\x_0), p(\x_0)) \mathbf{n} = \si_s(\e(\x_0) ) \mathbf{n}.
        \end{equation*}
    \end{itemize}
    These conditions are generally incompatible in the classical sense, giving rise to singularities that are not simple superpositions of fluid and solid corner singularities. The fluid and solid singularities are coupled, and the behavior of one subsystem directly influences the other.

    Despite the extensive literature on corner singularities for single PDEs, a rigorous $\H^2$-type regularity theory for FSI systems in polygonal domains---particularly in the presence
of interface-boundary corner singularities---has been lacking. In particular, it remains unclear how to obtain second-order spatial regularity estimates that are compatible with the coupled nature of the problem.

\medskip
\noindent
\textbf{Main contribution and methodological novelty.} 
The main objective of this work is to establish an $\H^{2,2}_{\bbeta}$-regularity theory  for  fluid-structure interaction (FSI) problems posed in polygonal domains with interface-boundary corner singularities. Rather than deriving explicit asymptotic expansions such as  \eqref{corner_singularity:single_pde}---which are prohibitively complex for coupled systems---we adopt a weighted Sobolev framework $\H^{k,l}_{\bbeta_i}(\Omega_i)$ ($k\ge l\ge 0$, $i=f,s$) following \cite{babuvska1988h,babuvska1988regularity,babuvska1989regularity}. Within this setting,  we identify critical weight exponents $\bbeta_i$ such that
\begin{equation}
        \|\u(t)\|_{\H^{2,2}_{\bbeta_f}(\Omega_f) }+\|\e (t)\|_{\H^{2,2}_{\bbeta_s} (\Omega_s)}\leqslant
        C.
    \end{equation}
The weighted framework controls second derivatives near geometric singularities through the weight $r(x)^{\bbeta}$, where $r(x)$ denotes the distance to a corner. The exponent $\bbeta$ compensates for the loss of regularity, yielding finite $\H^2$-type norms even when classical $\H^2$ estimates fail. In this way, weighted Sobolev spaces capture the essential singular behavior without requiring explicit asymptotic expansions--an approach particularly effective for coupled systems in non-smooth domains.

A key structural feature of the analysis is that $\H^2$-regularity for the fluid and the solid cannot be established separately. The corner singularity intrinsically couples the Stokes and elasticity equations, and second-order estimates must therefore be derived for the fully coupled system.



Our methodology is based on an operator framework that reformulates the coupled FSI system in terms of solution operators for the Stokes and elasticity subproblems. We introduce Neumann, Dirichlet, and force operators ($N_\Gamma$, $\tilde{N}_\Gamma$, $D_{\Gamma_f}$, $D_{\Gamma}$, $F_1$ and $F_2$)  which map interface data, boundary data, and source terms to the corresponding fluid and solid solutions. This construction allows the coupled solution $(\mathbf{u},p,\boldsymbol{\eta})$ to be expressed implicitly through operator compositions acting on the interface variables. Specifically, 
 \begin{equation}
            \begin{cases}
                (\mathbf{u},p) = N_\Gamma \boldsymbol{\eta} + D_{\Gamma_f}\mathbf{u}_{in} + F_1(\mathbf{f}-\partial_t\mathbf{u}), \\
                \boldsymbol{\eta} = D_\Gamma\left(\int_0^t \mathbf{u}(s)ds + \boldsymbol{\eta}_0\right) + F_2(\mathbf{h}-\partial_{tt}\boldsymbol{\eta}).
            \end{cases}
        \end{equation}
        Substituting the expression for $\boldsymbol{\eta}$ into the fluid equation yields a closed equation for  $\mathbf{u}$:
	\begin{align}\label{eq:integral_u}
    (\u,p)&=N_{\Gamma}D_\Gamma\int_{0}^{t}\u (s) \d s+N_\Gamma D_{\Gamma}\e_0+D_{\Gamma_f}\u_{in}+N_\Gamma F_2(\h-\partial_{tt}\e)+F_1(\f-\partial_t\u),\notag\\
\u&=(N_{\Gamma})_1D_\Gamma\int_{0}^{t}\u (s) \d s+(N_\Gamma)_1 D_{\Gamma}\e_0+(D_{\Gamma_f})_1\u_{in}+(N_{\Gamma})_1 F_2(\h-\partial_{tt}\e)+(F_1)_1(\f-\partial_t\u).
	\end{align}
      Introducing the auxiliary variable $\mathbf{v}(t) = \int_{0}^{t} \mathbf{u}(s) \d s$ transforms the integral equation  into the evolution equation
	\begin{align}
		\partial_t\vv&=(N_\Gamma)_1D_{\Gamma}\vv(t)+(N_\Gamma)_1D_{\Gamma}\e_0+(D_{\Gamma_f})_1\u_{in}+(N_\Gamma)_1 F_2(\h-\partial_{tt}\e)+(F_1)_1(\f-\partial_t\u).
	\end{align}
By the weighted regularity theory for Stokes and elasticity systems \cite{guo1993regularity,guo2006analytic},  the composition operator $(N_\Gamma)_1D_{\Gamma}$ is bounded and linear  on $\H^{2,2}_{\bbeta_f}(\Omega_f)$. Consequently, by  standard semigroup theory \cite[Theorem 1.2, Corollary 1.4]{pazy2012semigroups}, the operator $(N_\Gamma)_1 D_{\Gamma}$ generates a uniformly continuous  semigroup $\{T(t)\}_{t\ge0}$ on  $\mathbf{H}^{2,2}_{\bm{\beta}_f}(\Omega_f)$. 

The full solution is then reconstructed in a manner that rigorously preserves the coupling conditions. In particular, there exists $\vv \in \H^{2,2}_{\bbeta_f}(\Omega_f)$ such that
 \begin{equation}
                \begin{cases}
                    \boldsymbol{\eta} = D_\Gamma(\mathbf{v}(t) + \boldsymbol{\eta}_0) + F_2(\mathbf{h}-\partial_{tt}\boldsymbol{\eta}) ,\\
                    (\mathbf{u},p) = N_\Gamma \boldsymbol{\eta} + D_{\Gamma_f}\mathbf{u}_{in} + F_1(\mathbf{f}-\partial_t\mathbf{u}). 
                \end{cases}
            \end{equation}

            \medskip
\noindent
\textbf{Organization of the paper.} The remainder of this paper is organized as follows. Section~\ref{sec:Notations} introduces the weighted Sobolev spaces and states the main regularity results. Section~\ref{sec:proof} develops the solution-operator framework and presents the proof of the main theorem. Section~\ref{sec:conclusion} concludes the paper and outlines directions for future research.

\section{Notations and Main Results}\label{sec:Notations}
\subsection{The weighted Sobolev spaces $H^{k,l}_{\beta}$}

Throughout this paper, boldface symbols such as $\bm{X}$ are used to indicate spaces of vector-valued functions, while the plain symbol $X$ refers to the corresponding scalar space.

We use the notations and definitions for function spaces introduced in \cite{babuvska1988regularity,babuvska1989regularity,guo1993regularity, guo2006analytic,muller2017symmetric}. As pictured in Figure~\ref{fig:FSI_corner_polygonal_domain}, let $\Omega = \Omega_f \cup \Omega_s \cup \Gamma \subset \mathbb{R}^2$ be a polygon with vertices $A_i$ and open edges $\Gamma_i$ connecting $A_i$ and $A_{i+1}$, $0 \leq i \leq M \,(A_{M+1}=A_0)$. Assume there exists an index $J\in \{2,3,\cdots,M-1\}$ such that the interface is $\Gamma=A_0A_{J}$ (a line segment), the fluid boundary is $\Gamma_f := \bigcup_{j \in \{0,1,\cdots,J-1\} }\Gamma_j$, and the structure boundary is $\Gamma_s := \bigcup_{j \in \{J,J+1,\cdots,M\} } \Gamma_j$.  

Let $\{\omega_{f,i}\in (0,2\pi): i=0,1,\cdots,J\} $ denote the interior angles of $\Omega_f$ at its vertices $A_i$. Similarly, let $\{\omega_{s,i}\in (0,2\pi): i=0,1,\cdots,M+1-J\} $ denote the interior angles of $\Omega_s$ at its vertices $A_{M+1-i}$. For brevity, we set $J_f=J$, $J_s=M+1-J$. 

 \begin{figure}
\centering
\includegraphics[width=0.7\linewidth]{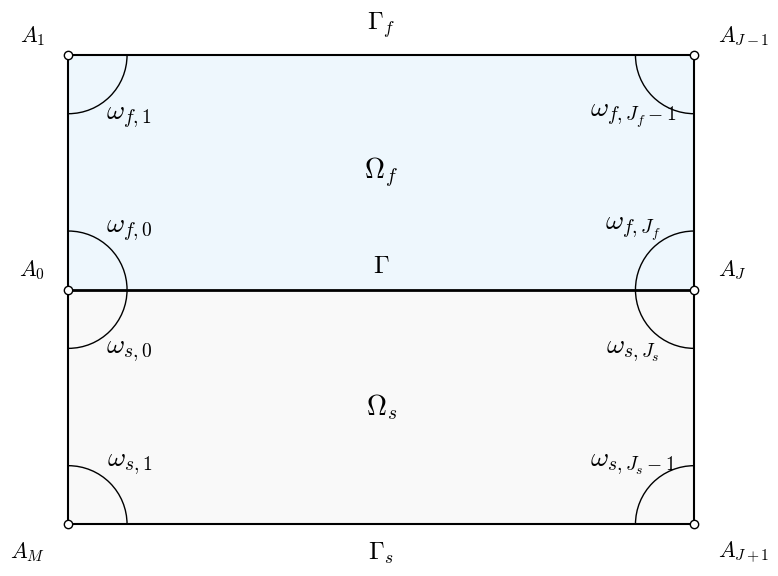}
\caption{\label{fig:FSI_corner_polygonal_domain}Fluid-structure interaction configuration in a polygonal domain with interface-boundary corners.}
\end{figure}

Furthermore, let $\bm{\beta}_f = (\beta_{f,0}, \ldots, \beta_{f,J_f})$ be an $(J_f+1)$-tuple of real numbers and $\bm{\beta}_s = (\beta_{s,0}, \ldots, \beta_{s,J_s} )$ be an $(J_s+1)$-tuple of real numbers, all satisfying $ \beta_{f,j},\beta_{s,j} \in (0,1)$. We define the weight functions
\begin{align}
    \Phi_{\bm{\beta}_f+k}(\x) &:= \prod_{i=0}^{J_f} r_i(\x)^{\beta_{f,i} + k},\\
    \Phi_{\bm{\beta}_s+k}(\x) &:= \prod_{i=0}^{J_s} r_i(\x)^{\beta_{s,i} + k},
\end{align}
where $r_i(\x) = \text{dist}(\x, A_i)$ denotes the distance from point $\x$ to the vertex $A_i$, $k$ is any integer.
\begin{remark}
    To analyze the localization properties of the weight functions $\Phi_{\bm{\beta}_f}(\x)$ and $\Phi_{\bm{\beta}_s}(\x)$, define local corner domains $D_i$ by
\begin{equation}\label{local_cone}
    D_i:=\{\x \in \Omega:|\x-A_i|<R_i\},\qquad 0 \leq i \leq M,
\end{equation}
where $0<R_i<\frac{1}{2}\min _{i\neq j}|A_i-A_j|$. This ensures the domains $D_i$ are mutually disjoint.

If $\beta_{f,i}, \beta_{s,i} \in(0,1)$, then we have
    \begin{align}
        C_{dc}^{-1}\leq &r_i(\x)^{\beta_{f_i}}\leq C_{dc},\qquad\x \in \Omega_f\setminus\cup_{j=0}^MD_j,\notag \\
        C_{dc}^{-1}\leq &r_i(\x)^{\beta_{s_i}}\leq C_{dc},\qquad\x \in \Omega_s\setminus\cup_{j=0}^MD_j,\notag \\
        C_{dc}^{-1}\Phi_{\bm{\beta}_f}(\x)\leq &r_i(\x)^{\beta_{f_i}}\leq C_{dc}\Phi_{\bm{\beta}_f}(\x),\qquad\x \in \Omega_f\cap D_i,\notag \\
        C_{dc}^{-1}\Phi_{\bm{\beta}_s}(\x)\leq &r_i(\x)^{\beta_{s_i}}\leq C_{dc}\Phi_{\bm{\beta}_s}(\x),\qquad\x \in \Omega_s\cap D_i,
    \end{align}
     for a constant $C_{dc}>0$ depending on the radii $R_i$ in \eqref{local_cone}.
\end{remark}

Let $H^k(\Omega_f)$, $H^k(\Omega_s)$  denote the standard Sobolev spaces. For integers $k, \ell$ with $k \geq \ell \geq 0$,  the weighted Sobolev spaces $H^{k,\ell}_{\bm{\beta}_f}(\Omega_f)$, $H^{k,\ell}_{\bm{\beta}_s}(\Omega_s)$  are equipped with the norms
\begin{align}
    \|{u}\|_{H^{k,\ell}_{\bm{\beta}_f}(\Omega_f)}^2& := \|{u}\|_{H^{\ell-1}(\Omega_f)}^2 + \sum_{|\alpha| \geq \ell}^k \|{\Phi_{\bm{\beta}_f+|{\alpha}| - \ell} D^\alpha u}\|^2_{L^2(\Omega_f)},\\
    \|{u}\|_{H^{k,\ell}_{\bm{\beta}_s}(\Omega_s)}^2& := \|{u}\|_{H^{\ell-1}(\Omega_s)}^2 + \sum_{|\alpha| \geq \ell}^k \|{\Phi_{\bm{\beta}_s+|{\alpha}| - \ell} D^\alpha u}\|^2_{L^2(\Omega_s)},
\end{align}
where the term $\|{u}\|_{H^{\ell-1}(\Omega_f)}$ or $\|{u}\|_{H^{\ell-1}(\Omega_s)}$ is omitted if $\ell = 0$. In the special case $k = \ell = 0$, we use the notation ${L}^2_{\bm{\beta}_f}(\Omega_f)$ and ${L}^2_{\bm{\beta}_s}(\Omega_s)$ for the spaces $H^{0,0}_{\bm{\beta}_f}(\Omega_f)$ and $H^{0,0}_{\bm{\beta}_s}(\Omega_s)$, respectively. Here, we use standard multi-index notation for higher derivatives in Cartesian coordinates:
\begin{equation}
    D^\alpha u = \frac{\partial^{|{\alpha}|} u}{\partial x_1^{\alpha_1} \partial x_2^{\alpha_2}}, \quad \alpha = (\alpha_1, \alpha_2), \quad |{\alpha}| = \alpha_1 + \alpha_2.
\end{equation}

Next,we define the trace of functions in the weighted Sobolev spaces  $H^{k,\ell}_{\bm{\beta}_i}(\Omega_i)$, $i=f,s$. Let $\gamma$ be a subset (or the entirety) of the boundary $\partial \Omega_i$. For integers $k \geq l \geq 1$, the space $H^{k-1/2,l-1/2}_{\bbeta_i}(\gamma)$ is defined as the set of all functions $\varphi$ on $\gamma$ for which there exists a lifting $\chi\in H^{k,\ell}_{\bm{\beta}_i}(\Omega_i)$ satisfying $\varphi =\chi |_{\gamma}$, and we set
\begin{align}
    \|\varphi\|_{H^{k-1/2,l-1/2}_{\bbeta_i}(\gamma)}&= \inf_{\chi\in H^{k,l}_{\bm{\beta}_i}(\Omega_i),\; \chi|_{\gamma} =\varphi}\|\chi\|_{H^{k,l}_{\bm{\beta}_i}(\Omega_i)},
\end{align}
where the infimum is taken over all such liftings. We also introduce the subspace of traces that vanish on the complement of $\gamma$
\begin{align}
    H^{1/2,1/2}_{\bbeta_i,00}(\gamma;\Omega_i)&:=\{v\in H^{1/2,1/2}_{\bbeta_i}(\gamma): \exists w\in H^{1,1}_{\bbeta_i}(\Omega_i)\;\mathrm{s.t}\;w|_{\gamma}=v \;\mathrm{and}\;w|_{\partial\Omega_i\setminus{\gamma}}=0  \},
\end{align}
equipped with the norm inherited from $H^{1/2,1/2}_{\bbeta_i}(\gamma)$, {see \cite[Section 4]{babuvska1989regularity}}. 

For classical Sobolev spaces, we similarly define
\begin{equation}
        H_{00}^{1/2}(\gamma;\Omega_i)=\{u\in H^{1/2}(\gamma):\exists w\in H^1(\Omega_i)\;\mathrm{s.t.}\;w|_{\gamma}=u\;\mathrm{and}\;w|_{\partial\Omega_i\setminus\gamma}=0\}.
    \end{equation}
    equipped with the norm inherited from $H^{1/2}(\gamma)$.
\begin{remark}
    Although the space $\H^{m-1/2,l-1/2}_{\bm{\beta}_f}(\gamma)$ (resp., $\H^{m-1/2,l-1/2}_{\bm{\beta}_s}(\gamma)$) is intrinsically characterized only by the weights $\beta_{f,i}$ (resp., $\beta_{s,i}$) associated with the vertices of the edges constituting $\gamma$, our definition constructs it based on the full weight parameter $\bm{\beta}_f$ (resp., $\bm{\beta}_s$) of the domain $\Omega_f$ (resp., $\Omega_s$). 
\end{remark}

\subsection{Weak solutions}
Throughout this paper, $C$ denotes a positive constant, depending only on the domains $\Omega$, $\Omega_f$ and $\Omega_s $, whose value may change from line to line. We are now in a position to introduce a weak formulation of the system \eqref{eq:stoke_elasticity_t}--\eqref{con:initial} .
	
	Assume  that 
	\begin{equation}\label{eq:initial_assume}
		\begin{cases}
			&\f \in L^2(0,T:\L^2(\Omega_f)),\quad \h \in L^2(0,T:\L^2(\Omega_s)),\; \\
            &\u_{in}\in L^\infty (0,T:\H^{1/2}(\Gamma_{f})),\quad \partial_t \u_{in}\in L^\infty (0,T:\H^{1/2}(\Gamma_{f})),\;\\
			&\u_0 \in \H^1(\Omega_f), \quad \e_0\in \H^1(\Omega_s),\quad \e_1\in \H^1(\Omega_s).
		\end{cases}
	\end{equation}
	Moreover, assume that the following compatibility conditions hold at the initial time:
	\begin{equation}\label{eq:initial_compatibility}
		\begin{cases}
			\nabla\cdot \u_0=0& \text{on} \,\,\Omega_f,\\
			\u_0=\u_{in}|_{t=0} &\text{on} \,\, \Gamma_{f},\\
			\e_1=\u_0  &\text{on} \,\,\Gamma,
		\end{cases}
	\end{equation}

    	We denote by $(\cdot,\cdot)_{\mathcal{D}}$ the $L^2$ inner product on a domain $\mathcal{D}$.
        Define the bilinear forms
	\begin{align*}
		a_1[\u,\vv ]&=2(\mathbb{D}(\u), \mathbb{D}(\vv ))_{\Omega_f}&&\forall\u ,\vv \in \H^1(\Omega_f),\\
		b[\vv,q]&=-(\nabla\cdot\vv,q)_{\Omega_f}  &&\forall\vv \in \H^1(\Omega_f),\, q\in L^2(\Omega_f),\\
		a_2[\u,\vv ]&=2\mu _1(\mathbb{D}(\u), \mathbb{D}(\vv ))_{\Omega_s}+\ur (\nabla\cdot\u ,\nabla\cdot\vv)_{\Omega_s}  &&\forall\u ,\vv \in \H^1(\Omega_s).
	\end{align*}

    To derive the variational formulation of system \eqref{eq:stoke_elasticity_t}--\eqref{con:initial}, we introduce the  space
    \begin{align*}
        \H^1_{\Gamma_f}(\Omega)&=\{\vv\in \H^1(\Omega):\vv|_{\Gamma_f}=\mathbf{0}\},
    \end{align*}
    which incorporates the boundary condition \eqref{con:boundary}.
    
We are now ready to define weak solutions.
    \begin{definition}[Weak solution]
        We say that functions
	$$\u\in L^2(0,T;\H^1(\Omega_f)\quad \text{with} \quad \partial_t\u\in L^2(0,T;\L^2(\Omega_f)),\quad  p \in L^2(0,T;L^2(\Omega_f))$$
	and 
	$$\e\in L^2(0,T;\H^1(\Omega_s))\quad \text{with}\quad  \partial_t\e \in L^2(0,T;\H^1(\Omega_s)),\quad \partial_{tt}\e\in L^2(0,T;\L^2(\Omega_s))$$
	constitute a weak solution of \eqref{eq:stoke_elasticity_t}--\eqref{con:initial} provided that the following conditions are satisfied:
	\begin{itemize}
	    \item [(i)] For every $ \vv \in \H^1_{\Gamma_f}(\Omega)$, $ \psi  \in L^2(\Omega_f)$ and for a.e. time $t\in (0,T]$,
        \begin{equation}\label{eq:weak_formulation_t}
		\begin{cases}
			(\partial_t\u ,\vv )_{\Omega_f}+a_1[\u,\vv ]+b[\vv,p]+(\partial_{tt}\e,\vv )_{\Omega_s}+a_2[\e,\vv ]=(\f,\vv )_{\Omega_f}+(\h ,\vv )_{\Omega_s},&\\
			b[\u ,\psi ]=0,&\\
            \u =\u _{in} \;\;\text{on} \;\;\Gamma_f.
		\end{cases}
	\end{equation}
    \item[(ii)] The initial conditions hold:
\[
\u(0) = \u_0, 
\qquad
\e(0) = \e_0,
\qquad
\partial_t \e(0) = \e_1.
\]
	\end{itemize}
    \end{definition}
    

    \subsection{Main results}
    We begin by observing that, for each fixed $t \in (0,T]$, the pressure $p(t,\x)$ formally satisfies the following elliptic problem in the spatial variable $\x$:
    \begin{equation}\label{pde:p_u_eta}
		\begin{cases}
			\Delta p={\nabla\cdot}(-\nabla\cdot (2\mathbb{D}(\mathbf{u}) - p \mathbb{I}))&\text{in}\,\,\Omega_f\times (0,T],\\
			p=[2\D (\u)\n-\sigma_s(\e)\n]\cdot \n &\text{on}\,\,\Gamma\times (0,T],\\
            \frac{\partial p}{\partial \n }=[-\nabla\cdot (2\mathbb{D}(\mathbf{u}) - p \mathbb{I})]\cdot \n +[\nabla \cdot 2\D (\u)]\cdot \n &\text{on} \,\,
			\Gamma_f\times (0,T].
		\end{cases}
	\end{equation}
The boundary condition on $\Gamma$ follows by taking the normal component of the traction continuity relation in \eqref{con:interface}.

 We next decompose the pressure into $p = r + q$, where $r$ and $q$ solve the following boundary value problems:
\begin{align}
    &\begin{cases}
			\Delta r={\nabla\cdot} (-\nabla\cdot (2\mathbb{D}(\mathbf{u}) - p \mathbb{I})) &\text{in}\,\,\Omega_f\times (0,T],\\
			r=0&\text{on}\,\,\Gamma\times (0,T],\\
			\frac{\partial r}{\partial \n }=[-\nabla\cdot (2\mathbb{D}(\mathbf{u}) - p \mathbb{I})]\cdot \n &\text{on} \,\,
			\Gamma_f\times (0,T],
		\end{cases} \label{eq:pressure_r}\\
        &\begin{cases}
			\Delta q=0 &\text{in}\,\,\Omega_f\times (0,T],\\
			q=[2\D (\u)\n-\sigma_s(\e)\n]\cdot \n &\text{on}\,\,\Gamma\times (0,T],\\
			\frac{\partial q}{\partial \n }=[\nabla \cdot 2\D (\u)]\cdot \n &\text{on} \,\,
			\Gamma_f\times (0,T],
		\end{cases}.
        \label{eq:pressure_q}
\end{align}
Assuming \begin{equation}
    -\nabla\cdot (2\mathbb{D}(\mathbf{u}) - p \mathbb{I})\in \L^2(\Omega_f),
\end{equation}
the function $r\in H^1_\Gamma(\Omega_f)$   is the unique weak solution of
\begin{equation}\label{map:definition_M}
    (\nabla r,\nabla \phi)_{\Omega_f}=(-\nabla\cdot (2\mathbb{D}(\mathbf{u}) - p \mathbb{I}),\nabla\phi)_{\Omega_f},\qquad\forall \phi \in H^1_\Gamma(\Omega_f),
\end{equation}
where
\begin{align}
        H^1_{\Gamma}(\Omega_f)&=\{v\in H^1(\Omega_f):v|_{\Gamma}=0\}.
    \end{align}
Consequently, $q=p-r$ can be expressed entirely in terms of $\u$ and $\e$. Therefore, the full FSI solution$(\u,p,\e)$ of  \eqref{eq:stoke_elasticity_t} may be characterized solely through the velocity-displacement pair $(\u,\e)$. 
    
We shall show that  $(\u,\e)$ belongs to the weighted Sobolev space
\begin{equation*}
    \H^{2,2}_{\bbeta_f}(\Omega_f)\times\H^{2,2}_{\bbeta_s}(\Omega_s),
\end{equation*}
and that this regularity is inherited from the initial data $(\u_0,\e_0)$.

To identify the appropriate weighted framework, note that the Stokes and elasticity operators formally satisfy
\begin{align*}
    -\nabla\cdot (2\mathbb{D}(\mathbf{u}) - p \mathbb{I})&=\f-\partial_t\u \qquad \text{with}\quad  \f\in L^2(0,T;\L^2(\Omega_f)),\\
    -\nabla\cdot (2\mu_1\mathbb{D}(\e ) +\mu_2(\nabla\cdot \e ) \mathbb{I})&=\h-\partial_{tt}\e\qquad\text{with}\quad  \h \in L^2(0,T;\L^2(\Omega_s)).
\end{align*}
    
  Motivated by these considerations, we introduce the solution space $\X_{\bbeta_f,\bbeta_s}$, consisting of all pairs $(\u,\e)$ satisfying:
    \begin{itemize}
        \item Interior regularity:
        \begin{equation}
            \begin{cases}
            \u &\in \H^{2,2}_{\bbeta_f}(\Omega_f), \quad \text{with}\; \nabla \cdot \u =0, \\
            \e &\in \H^{2,2}_{\bbeta_s}(\Omega_s),\quad \text{with} \;-\nabla\cdot (2\mu_1\mathbb{D}(\e ) +\mu_2(\nabla\cdot \e ) \mathbb{I})\in \L^2(\Omega_s).\\
        \end{cases}
        \end{equation}
        \item Existence of an  associated  pressure $p$: There exists $p\in H^{1,1}_{\bbeta_f}(\Omega_f)$ such that
        \begin{equation}\label{eq:con_p}
            \begin{cases}
                &-\nabla\cdot (2\mathbb{D}(\mathbf{u}) - p \mathbb{I})\in \L^2(\Omega_f),\\
                &(2 \mathbb{D}(\mathbf{u}) - p\mathbb{I})\mathbf{n} = \si_s(\e ) \mathbf{n}\qquad\text{on}\,\,\Gamma.
            \end{cases}
        \end{equation}
        \item Construction of a modified pressure $q$: Let $r\in H^1_\Gamma(\Omega_f)$ be the weak soultion of \eqref{map:definition_M}, and define 
        \begin{equation}\label{def:q}
            q=p-r.
        \end{equation}
        Then  $({\u},q)$ satisfies the Stokes system
        \begin{equation}\label{eq:q_stokes}
		\begin{cases}
			-\nabla\cdot (2\mathbb{D}({\u}) - q \mathbb{I})=-\nabla\cdot (2\mathbb{D}(\mathbf{u}) - p \mathbb{I})-\nabla r &\text{in}\,\,\Omega_f,\\
			\nabla\cdot {\u}=0 &\text{in}\,\,\Omega_f,\\
			(2\mathbb{D}({\u}) - q \mathbb{I})\n =\si_s(\e)\n&\text{on} \,\,\Gamma,\\
            {\u}=\u|_\Gamma &\text{on} \,\,\Gamma_f,
		\end{cases}
	\end{equation}
    together with the weak formulation
    \begin{equation}\label{eq:weak_solution_q}
        \begin{cases}
            &a_1[\u,\vv ]+b[\vv,q]=(\si_s(\e)\n,\vv)_\Gamma+(-\nabla\cdot (2\mathbb{D}(\mathbf{u}) - p \mathbb{I}),\vv )_{\Omega_f}-(\nabla r,\vv )_{\Omega_f},\qquad\forall \vv \in \H ^1_{\Gamma_f}(\Omega_f),\\
            &b[\u ,\psi ]=0,\qquad\forall \psi \in L^2(\Omega_f),\\
        \end{cases}
    \end{equation}
        \end{itemize} 
    where
\begin{align}
        \H^1_{\Gamma_f}(\Omega_f)&=\{v\in \H^1(\Omega_f):v|_{\Gamma_f}=0\}.
    \end{align}

For the analysis that follows, we introduce the relevant function spaces. For $i=f,s$,   we define the vector-valued space
\[\H(\mathrm{div},\Omega_i):=\{\vv\in \L^2(\Omega_i):\nabla\cdot \vv \in L^2(\Omega_i)\}\]
    equipped with the norm 
    $\|\vv\|_{\H(\mathrm{div},\Omega_i)}=\|\vv \|_{\L^2(\Omega_i)}+\|\nabla\cdot \vv \|_{L^2(\Omega_i)}$.
    Of particular importance is its divergence-free subspace
    \[\H(\mathrm{div}^0,\Omega_i):=\{\vv\in \L^2(\Omega_i):\nabla\cdot \vv =0\}.\]
The corresponding matrix space {$\mathbb{H}(\mathrm{div},\Omega_i)$} 
consists of all  $\mathbb{R} ^{2\times2} $ matrices whose row vectors belong to $\H(\mathrm{div},\Omega_i)$. Its norm is taken as the sum of the $\H(\mathrm{div},\Omega_i)$-norms of its row vectors.

We now show that the modified pressure $q=p-r$ defined in \eqref{def:q} is uniquely determined by $(\u,\e)$ and is independent of the particular choice of $(p,r)$. 

Suppose that two triples $(p_1,r_1,q_1)$ and $(p_2,r_2,q_2)$ satisfy \eqref{eq:con_p}--\eqref{eq:weak_solution_q}.
By \cite[Lemma,2.2]{nguyen2015boundary}, the space $\L^2(\Omega_f)$ admits the  orthogonal decomposition 
\begin{equation}
    \L^2(\Omega_f)=\{\mathbf{z}\in \H(\mathrm{div}^0,\Omega_f):\mathbf{z}\cdot \n =0\;\mathrm{on}\;\Gamma_f\}\oplus\nabla H^1_\Gamma(\Omega_f).
\end{equation}
Consequently, for any $ \vv \in \L^2(\Omega_f)$, there exist
\begin{equation}
    \vv_1\in \{\mathbf{z}\in \H(\mathrm{div}^0,\Omega_f):\mathbf{z}\cdot \n =0\;\mathrm{on}\;\Gamma_f\},\qquad v_2\in H^1_\Gamma(\Omega_f),
\end{equation}
such that $\vv=\vv_1+\nabla v_2$. Substituting this decomposition into \eqref{eq:weak_solution_q} and invoking the definition of $r$ in \eqref{map:definition_M}, we obtain
\begin{equation}
    b[\vv,q_1]-b[\vv,q_2]=0.
\end{equation}
By the inf-sup condition established in \cite[Lemma 3.1]{bermudez1998finite} and  \cite[Lemma 4.1]{girault2012finite}, this implies that $q_1=q_2$. Furthermore, when $(\u,\e)=\mathbf{0}$, one may choose $p=0$, which consequently forces $q=0$.

Finally, we equip  $\X_{\bbeta_f,\bbeta_s}$  with the norm:
\begin{align}
    \|(\u,\e)\|_{\X_{\bbeta_f,\bbeta_s}}&:=\|\u\|_{\H^{2,2}_{\bbeta_f}(\Omega_f)}+\|-\nabla\cdot (2\mathbb{D}(\mathbf{u}) - q \mathbb{I})\|_{\L^2(\Omega_f)}\notag\\
    &\quad +\|\e \|_{\H^{2,2}_{\bbeta_s}(\Omega_s)}+\|-\nabla\cdot (2\mu_1\mathbb{D}(\e ) +\mu_2(\nabla\cdot \e ) \mathbb{I})\|_{\L^2(\Omega_s)}.
\end{align}
where $q$ actually  depends only on $(\u,\e)\in \X_{\bbeta_f,\bbeta_s}$.

    To determine suitable weight exponents $\bbeta_f,\bbeta_s$ that compensate for the loss of regularity near corner singularities and ensure $\H^{2,2}_{\bbeta}$-regularity, we introduce the following characteristic constants associated with the corner angles of the domain:
    \begin{align*}
        \kappa_{f,j}&=\min \;\{|\operatorname{Im}(\lambda)|:\cos ^2(z\omega_{f,j})=z^2\sin^2(\omega_{f,j}), \,\lambda=iz
    ,\,\ z \neq 0 \}\quad \text{for}\quad j=0,J_f,\\
    \kappa_{f,j}&=\min \;\{|\operatorname{Im}(\lambda)|:\sin ^2(z\omega_{f,j})=z^2\sin^2(\omega_{f,j}), \,\lambda=iz,\,\ z \neq 0 
    \}\quad \text{for}\quad j=1,\cdots,J_f-1,\\
    \kappa_{s,j}&=\min \;\{|\operatorname{Im}(\lambda)|:\sin ^2 (z\omega_{s,j})=\frac{4(1-\nu)^2}{3-4\nu}-\frac{z^2}{3-4\nu}\sin ^2 (\omega_{s,j}),\,\lambda=iz,\,\ z \neq 0 
    \}\quad \text{for}\quad j=0,J_s,\\
    \kappa_{s,j}&=\min \;\{|\operatorname{Im}(\lambda)|:\sin ^2(z\omega_{s,j})=z^2\sin^2(\omega_{s,j}),\,\lambda=iz,\,\ z \neq 0 
    \}\quad \text{for}\quad j=1,\cdots,J_s-1.
    \end{align*}
    Here the Lamé constants satisfy the relation  $\ur/\ue=2\nu/(1-2\nu)$.
    
The quantities $\kappa_{f,j}$ and $\kappa_{s,j}$ arise from the eigenvalue analysis of the Stokes and elasticity systems posed in an infinite sector of opening angle $\omega$. They characterize the leading-order singular behavior of solutions near geometric corners. Details of this analysis are provided in Section~\ref{sec:solution_operators}.
 
     The principal theoretical results of this work are summarized in the following existence, uniqueness, and weighted regularity theorem.
    \begin{theorem}[Weighted $\H^2$ regularity]\label{thm:weighted_H2} 
    Let the weight vectors $\bm{\beta}_f = (\beta_{f,0}, \ldots, \beta_{f,J_f})$ and $\bm{\beta}_s = (\beta_{s,0}, \ldots, \beta_{s,J_s} )$ be chosen such that:
\begin{align*}
& 0 < \beta_{f,j} < 1, \quad \beta_{f,j} > 1 - \kappa_{f,j},\quad 0 < \beta_{s,j} < 1, \quad \beta_{s,j} > 1 - \kappa_{s,j},
\end{align*}
with the compatibility conditions $ \beta_{f,0}=\beta_{s,0}$ and $\beta_{f,J_f}=\beta_{s,J_s}$.
        	Assume that $\f, \h ,\u_{in}, \u_0 , \e_0$ and $\e_1 $ satisfy \eqref{eq:initial_assume}--\eqref{eq:initial_compatibility} and, in addition,
            \begin{align*}
                &(\u_0,\e_0)\in \X_{\bbeta_f,\bbeta_s},\quad \f\in L^\infty(0,T;\L^2(\Omega_f)),\quad\partial_t\f\in L^2(0,T;\L^2(\Omega_f)),\quad \partial_t\h\in L^2(0,T;\L^2(\Omega_s)),\\
                &\u_{in}\in L^\infty(0,T;\H^{3/2,3/2}_{\bbeta_f}(\Gamma_f)),\quad \partial_{t}\u_{in}\in L^\infty(0,T;\H^{1/2}(\Gamma_f)),\quad \partial_{tt}\u_{in}\in L^2(0,T;\H^{1/2,1/2}_{\bbeta_f,00}(\Gamma_f;\Omega_f)^\prime ).
            \end{align*}
             Then, there exists a unique triplet $({\u},{p},{\e})$ solving the coupled system \eqref{eq:stoke_elasticity_t}--\eqref{con:initial} such that
		\begin{align*}
			&{\u }\in L^\infty(0,T;\L^2(\Omega_f))\cap L^2(0,T;\H^1(\Omega_f)),\quad {\e} \in L^\infty(0,T;\H^1(\Omega_s)),\quad {p}\in L^2(0,T;L^2(\Omega_f)),\\
			&\partial_t{\u }\in L^\infty(0,T;\L^2(\Omega_f))\cap L^2(0,T;\H^1(\Omega_f)),	\quad \partial_t{\e} \in L^\infty(0,T;\H^1(\Omega_s)),\quad \partial_{tt}{\e} \in L^\infty(0,T;\L^2(\Omega_s)).		
		\end{align*}
        Moreover, for almost every $t\in[0,T]$, the solution enjoys higher regularity in the weighted spaces:
        \begin{equation}
            \u(t)\in \H^{2,2}_{\bbeta_f}(\Omega_f) ,\quad p(t)\in H^{1,1}_{\bbeta_f}(\Omega_f),\quad \e (t)\in \H^{2,2}_{\bbeta_s}(\Omega_s).
        \end{equation}
        The following estimate holds:
        \begin{align}\label{eq:estimate_u}
			&\|(\u(t),\e(t))\|_{\X_{\bbeta_f,\bbeta_s}}+\|p(t)\|_{{H}^{1,1}_{\bm{\beta}_f}(\Omega_f)}+\|{\u }(t)\|_{\L^2(\Omega_f)}+\|{\u }\|_{L^2(0,T;\H^1(\Omega_f)}+\|{\e }(t)\|_{\H^1(\Omega_s)}\notag\\
			&+ \|{p}\|_{L^2(0,T;L^2(\Omega_f))}+\|\partial_t{\u }(t)\|_{\L^2(\Omega_f)}+\|\partial_t{\u }\|_{L^2(0,T;\H^1(\Omega_f)}+\|\partial_t{\e }(t)\|_{\H^1(\Omega_s)}+\|\partial_{tt}{\e }(t)\|_{\L^2(\Omega_s)}\notag\\
			&\leqslant Ce^{CT}\big(\|\f\|_{L^\infty(0,T;\L^2(\Omega_f))}+\|\partial_t\f\|_{L^2(0,T;\L^2(\Omega_f))}+\|\h\|_{H^1(0,T;\L^2(\Omega_s))}+\|\u_{in}\|_{L^\infty(0,T;\H^{3/2,3/2}_{\bbeta_f}(\Gamma_f))}\notag\\
        &\quad+\|\partial_t \u_{in}\|_{L^\infty(0,T;\H^{1/2}(\Gamma_f))} +\|\partial_{tt} \u_{in}\|_{L^2(0,T;\H^{1/2,1/2}_{\bbeta_f,00}(\Gamma_{f};\Omega_f)^\prime )}+\|\e_1\|_{\H^1(\Omega_s)}+\|(\u_0,\e_0)\|_{X_{\bbeta_f,\bbeta_s}}\big).
		\end{align}
    \end{theorem}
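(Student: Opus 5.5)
The proof splits into two layers: energy-type estimates in time giving the unweighted bounds, and a stationary weighted elliptic argument at each fixed time giving the $\H^{2,2}_{\bbeta}$ bounds.

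\textbf{First layer: energy estimates.} I would construct the weak solution by a Galerkin approximation on $\H^1_{\Gamma_f}(\Omega)$ after lifting $\u_{in}$. Denote by $\mathbf{U}_{in}$ a divergence-free lifting of $\u_{in}$ into $\Omega_f$, extended by zero across $\Gamma$.

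Testing \eqref{eq:weak_formulation_t} with $\u-\mathbf{U}_{in}$ in $\Omega_f$ and with $\partial_t\e$ in $\Omega_s$ (continuous across $\Gamma$ by the kinematic condition) gives the basic energy identity. With Korn's inequality and Gronwall, this yields the bounds on $\u$ in $L^\infty\L^2\cap L^2\H^1$ and on $\e$ in $L^\infty\H^1$.

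Next I would differentiate the system in time. Then $(\partial_t\u,\partial_t\e)$ solves the same FSI system with data $(\partial_t\f,\partial_t\h,\partial_t\u_{in})$. Testing again gives the bounds on $\partial_t\u$ in $L^\infty\L^2\cap L^2\H^1$, on $\partial_t\e$ in $L^\infty\H^1$ and on $\partial_{tt}\e$ in $L^\infty\L^2$. The initial values $\partial_t\u(0)$ and $\partial_{tt}\e(0)$ are read off from the equations, and they are in $\L^2$ precisely because $(\u_0,\e_0)\in\X_{\bbeta_f,\bbeta_s}$: its norm controls $\nabla\cdot\si_f(\u_0,q_0)$ and $\nabla\cdot\si_s(\e_0)$ in $\L^2$.

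The term involving $\partial_{tt}\u_{in}$ is handled by duality, using the $\H^{1/2,1/2}_{\bbeta_f,00}(\Gamma_f;\Omega_f)'$ assumption. The pressure bound $p\in L^2L^2$ follows from the inf-sup condition. Uniqueness follows from the first energy identity applied to the difference of two solutions. This step produces the $Ce^{CT}$ factor.

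\textbf{Second layer: weighted regularity at fixed $t$.} For a.e. $t$, the pair $(\u(t),\e(t))$ solves a stationary coupled problem with right-hand sides $\f-\partial_t\u\in\L^2(\Omega_f)$ and $\h-\partial_{tt}\e\in\L^2(\Omega_s)$, which are bounded by the first layer. I would use the solution operators sketched in the introduction:
\begin{itemize}
\item $N_\Gamma$: the Stokes solution with Neumann data $\si_s(\e)\n$ on $\Gamma$;
\item $D_{\Gamma_f}$: the Stokes solution with Dirichlet data $\u_{in}$ on $\Gamma_f$;
\item $D_\Gamma$: the elasticity solution with Dirichlet data $\e|_\Gamma=\int_0^t\u+\e_0$ on $\Gamma$ and traction-free condition on $\Gamma_s$;
\item $F_1,F_2$: the force operators for the fluid and solid.
\end{itemize}

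By the weighted regularity theory for Stokes and Lamé on polygons \cite{guo1993regularity,guo2006analytic}, each operator is bounded between the appropriate $\H^{k,l}_{\bbeta}$ spaces. This holds exactly when $\beta_{i,j}>1-\kappa_{i,j}$, since that excludes eigenvalues of the corner pencils in the strip that would otherwise obstruct the estimate. The corner types enter as follows: mixed Dirichlet–Neumann pencils at $A_0,A_J$, and pure Dirichlet (fluid) or pure Neumann (solid) pencils elsewhere. The matching $\beta_{f,0}=\beta_{s,0}$ and $\beta_{f,J_f}=\beta_{s,J_s}$ ensures the trace spaces on $\Gamma$ match: the solid traction in $\H^{1/2,1/2}_{\bbeta_s}(\Gamma)$ is admissible Neumann data for the fluid, and $\u|_\Gamma\in\H^{3/2,3/2}_{\bbeta_f}(\Gamma)$ is admissible Dirichlet data for the solid.

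\textbf{Closing the estimate.} I would set $\vv=\int_0^t\u$ and write the evolution equation $\partial_t\vv=(N_\Gamma)_1D_\Gamma\vv+G(t)$, where $G(t)$ collects $(N_\Gamma)_1D_\Gamma\e_0$, $(D_{\Gamma_f})_1\u_{in}$, $(N_\Gamma)_1F_2(\h-\partial_{tt}\e)$ and $(F_1)_1(\f-\partial_t\u)$. Since $(N_\Gamma)_1D_\Gamma$ is bounded on $\H^{2,2}_{\bbeta_f}(\Omega_f)$, it generates a uniformly continuous semigroup. The variation of constants formula plus Gronwall then bound $\|\vv(t)\|_{\H^{2,2}_{\bbeta_f}}$ and hence $\|\u(t)\|_{\H^{2,2}_{\bbeta_f}}=\|\partial_t\vv\|$ by $\sup_t\|G(t)\|$, which the first layer controls. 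Reconstructing gives $\e=D_\Gamma(\vv+\e_0)+F_2(\cdot)\in\H^{2,2}_{\bbeta_s}$, and then $(\u,p)=N_\Gamma\e+\dots$ gives $p\in H^{1,1}_{\bbeta_f}$ via the splitting $p=r+q$ of \eqref{eq:pressure_r}--\eqref{eq:pressure_q}.

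\textbf{Main obstacle.} I expect the hardest step to be showing that $(N_\Gamma)_1D_\Gamma$ really maps $\H^{2,2}_{\bbeta_f}$ boundedly into itself at the interface–boundary corners $A_0,A_J$. Two things must hold there:
\begin{itemize}
\item the trace and traction of a $\H^{2,2}_{\bbeta}$ function must land in the weighted trace spaces, which requires a careful lifting argument near the vertices;
\item the Stokes mixed-corner regularity must hold with weight $\beta_{f,0}$ and the Lamé mixed-corner regularity with the same weight, which is where the compatibility of the two $\kappa$ conditions is used.
\end{itemize}
I would first prove both local estimates in the sectors $D_0$ and $D_J$ using Mellin-transform/Kondrat'ev estimates for the mixed pencils, and then patch them with cut-off functions and interior regularity.
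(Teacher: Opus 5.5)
\textbf{Verdict.} Your second layer is essentially the paper's Step~2: the evolution equation for $\vv=\int_0^t\u$, with the bounded generator $(N_\Gamma)_1D_\Gamma$ on $\H^{2,2}_{\bbeta_f}(\Omega_f)$, driven by $\f-\partial_t\u$ and $\h-\partial_{tt}\e$ in $L^\infty(0,T;\L^2)$. The gap is in the first layer, at the lifting.

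\textbf{Why the lifting fails.} You want a divergence-free $\mathbf{U}_{in}$ on $\Omega_f$ with $\mathbf{U}_{in}=\u_{in}$ on $\Gamma_f$, whose zero extension into $\Omega_s$ lies in $\H^1(\Omega)$. Such a field must vanish on $\Gamma$. By the divergence theorem this forces $\int_{\Gamma_f}\u_{in}\cdot\n=0$. The $\H^1$ gluing also forces $\u_{in}$ to vanish at $A_0$ and $A_J$ in the $H^{1/2}_{00}$ sense. Neither condition is assumed, and both fail for admissible data. Take $\u\equiv\u_{in}\equiv\mathbf{c}$, $p\equiv0$, $\e=\mathbf{c}t$, $\f=\h=\mathbf{0}$. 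This satisfies every hypothesis of the theorem, and it has nonzero flux through $\Gamma_f$ as soon as $\mathbf{c}$ is not parallel to $\Gamma$. Without $\mathbf{U}_{in}|_\Gamma=\mathbf{0}$, your test field $(\u-\mathbf{U}_{in},\partial_t\e)$ jumps across $\Gamma$. It is then not in $\H^1_{\Gamma_f}(\Omega)$, and neither of your energy identities is justified.

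\textbf{What the fix needs.} You need a coupled lifting $(\mathbf{U},\mathbf{E})$ with $\mathbf{U}=\u_{in}$ on $\Gamma_f$ and $\mathbf{U}|_\Gamma=\mathbf{E}|_\Gamma$, and you test the solid with $\partial_t\e-\mathbf{E}$. This is where the missing idea lies. In the time-differentiated estimate, the terms $(\partial_{tt}\u,\partial_t\mathbf{U})_{\Omega_f}$ and $(\partial_{ttt}\e,\partial_t\mathbf{E})_{\Omega_s}$ must be integrated by parts in time. So you need $\partial_{tt}\mathbf{U}$ and $\partial_{tt}\mathbf{E}$ in $L^2(0,T;\L^2)$, while $\partial_{tt}\u_{in}$ is only in $\H^{1/2,1/2}_{\bbeta_f,00}(\Gamma_f;\Omega_f)'$.

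Duality on $\Gamma_f$ handles only the fluid half. For the solid half you must:
\begin{enumerate}
\item define, by transposition, the trace on $\Gamma$ of the merely $\L^2$ field $(D_{\Gamma_f})_1\partial_{tt}\u_{in}$;
\item extend that trace into $\L^2(\Omega_s)$ by transposition for $D_\Gamma$.
\end{enumerate}
Both steps rest on the weighted $\H^{2,2}_{\bbeta}$ regularity of the adjoint problems and on $\beta_{f,0}=\beta_{s,0}$, $\beta_{f,J_f}=\beta_{s,J_s}$; see Lemmas~\ref{lem:transition_method} and~\ref{lem:trace_L2}.

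\textbf{How the paper handles it.} The paper's coupled lifting is the quasi-static FSI solution $(\u^*,p^*,\e^*)$ with frozen forces $\f^*,\h^*$ built from $(\u_0,\e_0)$. It is constructed by the same semigroup argument as Step~2. It satisfies $\u^*=\u_{in}$ on $\Gamma_f$ and $\partial_t\e^*=\u^*$ on $\Gamma$, and it reproduces $\u_0,\e_0$ at $t=0$. The remainder therefore has homogeneous boundary data and zero initial displacement and velocity, and its energy bounds are quoted from \cite{du2004semidiscrete}.

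\textbf{Comparison once repaired.} Your direct Galerkin route would be more self-contained. However, it must also justify the bound on $\partial_t\u(0)$ and $\partial_{tt}\e(0)$ at the discrete level. That requires projecting $(\u_0,\e_0)$ so that the interface traction identity encoded in $\X_{\bbeta_f,\bbeta_s}$ survives. The paper's subtraction makes this step trivial.
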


\section{Proof of the Main Results}\label{sec:proof}
In this section, we establish $\H^{2,2}_{\bbeta}$-regularity for the fluid-structure interaction system posed in polygonal domains with interface-boundary corner singularities.

The proof is based on an operator framework that decouples the Stokes and elasticity subproblems while preserving their coupling through interface operators. This reformulation reduces the coupled system to an evolution equation in the weighted space $\mathbf{H}^{2,2}_{\boldsymbol{\beta}}$. The desired regularity then follows from semigroup theory together with the weighted elliptic estimates developed in \cite{guo1993regularity,guo2006analytic}.

    \subsection{Solution operators}\label{sec:solution_operators}

        We begin by introducing a collection of solution operators that map boundary data and source terms to the corresponding solutions of the Stokes problem in $\Omega_f$ and the elasticity problem in $\Omega_s$. These operators constitute the core analytical tools used throughout the paper and provide the foundation for the operator-theoretic reformulation of the coupled system.

        \begin{itemize}
            \item [1.] The Neumann Map $N_\Gamma$, $\tilde{N}_\Gamma$: The operator $N_\Gamma$ maps a given displacement field $\mathbf{w}$ defined on $\Omega_s$ to a solution $(\mathbf{v}, q)$ of the Stokes system in $\Omega_f$, subject to a Neumann boundary condition on $\Gamma$ induced by $\mathbf{w}$:
            \begin{equation}
                \label{solution_operator_N}
		(\vv  ,q)=N_\Gamma \w \Longleftrightarrow\begin{cases}
			-\nabla\cdot (2\mathbb{D}(\mathbf{v}) - q \mathbb{I})=\mathbf{0} &\text{in}\,\,\Omega_f;\\
			\nabla\cdot \mathbf{v}=0 &\text{in}\,\,\Omega_f;\\
			\vv =\mathbf{0}&\text{on}\,\,\Gamma_f;\\
			(2\mathbb{D}(\mathbf{v}) - q \mathbb{I})\mathbf{n}=(2\mu_1\mathbb{D}(\w ) +\mu_2(\nabla\cdot \w ) \mathbb{I})\n &\text{on}\,\,\Gamma;
		\end{cases}
            \end{equation}
            We also introduce the modified Neumann Map $\tilde{N}_\Gamma$, defined by 
            \begin{equation}
                \label{solution_operator_~N}
		(\vv  ,q)=\tilde{N}_\Gamma g \Longleftrightarrow\begin{cases}
			-\nabla\cdot (2\mathbb{D}(\mathbf{v}) - q \mathbb{I})=\mathbf{0} &\text{in}\,\,\Omega_f;\\
			\nabla\cdot \mathbf{v}=0 &\text{in}\,\,\Omega_f;\\
			\vv =\mathbf{0}&\text{on}\,\,\Gamma_f;\\
			(2\mathbb{D}(\mathbf{v}) - q \mathbb{I})\mathbf{n}=g &\text{on}\,\,\Gamma;
		\end{cases}
            \end{equation}
        If $g$ is defined throughout the solid domain  $\Omega_s$, the operator is understood to act via its trace on the interface, i.e., $\tilde{N}_\Gamma g:=\tilde{N}_\Gamma(g{|_\Gamma})$.
        \item[2.] The Dirichlet Map (on $\Gamma_f$) $D_{\Gamma_f}$: 
        This operator maps a given velocity $\mathbf{w}$ on the boundary $\Gamma_f$ to the solution $(\mathbf{v}, q)$ of a homogeneous Stokes problem in $\Omega_f$ with that Dirichlet data:
        \begin{equation}\label{solution_operator_Df}
            (\vv  ,q)=D_{\Gamma_f} \w \Longleftrightarrow\begin{cases}
			-\nabla\cdot (2\mathbb{D}(\mathbf{v}) - q \mathbb{I})=\mathbf{0} &\text{in}\,\,\Omega_f;\\
			\nabla\cdot \mathbf{v}=0 &\text{in}\,\,\Omega_f;\\
			\vv =\w &\text{on}\,\,\Gamma_f;\\
			(2\mathbb{D}(\mathbf{v}) - q \mathbb{I})\mathbf{n}=\mathbf{0} &\text{on}\,\,\Gamma;
		\end{cases}
        \end{equation}
        \item[3.] The Force Map (for Stokes) $F_1$: 
        This operator maps a body force $\psi$ in $\Omega_f$ to the solution $(\mathbf{v}, q)$ of the resulting Stokes problem with homogeneous Dirichlet data on $\Gamma_f$ and homogeneous Neumann data on $\Gamma$:
        \begin{equation}\label{solution_operator_F1}
            (\vv  ,q)=F_1\psi  \Longleftrightarrow\begin{cases}
			-\nabla\cdot (2\mathbb{D}(\mathbf{v}) - q \mathbb{I})=\psi &\text{in}\,\,\Omega_f;\\
			\nabla\cdot \mathbf{v}=0&\text{in}\,\,\Omega_f;\\
			\vv =\mathbf{0}&\text{on}\,\,\Gamma_f;\\
			(2\mathbb{D}(\mathbf{v}) - q \mathbb{I})\mathbf{n}=\mathbf{0} &\text{on}\,\,\Gamma;
		\end{cases}
        \end{equation}
        \item[4.] The Dirichlet Map (on $\Gamma$) $D_\Gamma$: 
        This operator maps a given velocity $\mathbf{v}$ on the interface $\Gamma$ to the solution $\mathbf{w}$ of a homogeneous elasticity problem in $\Omega_s$ with that Dirichlet data on $\Gamma$ and  homogeneous Neumann data on $\Gamma_s$:
        \begin{equation}\label{eq:D_Gamma}
            \w =D_\Gamma \vv \Longleftrightarrow\begin{cases}
			-\nabla\cdot (2\mu_1\mathbb{D}(\w ) +\mu_2(\nabla\cdot \w ) \mathbb{I})=\mathbf{0}&\text{in} \,\,\Omega_s;\\
			\w =\vv &\text{on}\,\,\Gamma;\\
			(2\mu_1\mathbb{D}(\w ) +\mu_2(\nabla\cdot \w ) \mathbb{I})\n=\mathbf{0} &\text{on}\,\,\Gamma_s.
		\end{cases}
        \end{equation}
        If  the input $\vv$ is defined throughout the fluid domain  $\Omega_f$, the operator acts by taking its trace on the interface, i.e., $D_\Gamma \vv:=D_\Gamma(\vv{|_\Gamma})$.
        \item[5.] The Force Map (for Elasticity) $F_2$:\\
        This operator maps a body force $\phi$ in $\Omega_s$ to the solution $\mathbf{w}$ of the resulting elasticity problem with homogeneous Dirichlet data on  $\Gamma$ and  homogeneous Neumann data on $\Gamma_s$:
        \begin{equation}
            \label{solution_operator_F_2}
		\w =F_2\phi \Longleftrightarrow\begin{cases}
			-\nabla\cdot (2\mu_1\mathbb{D}(\w ) +\mu_2(\nabla\cdot \w ) \mathbb{I})=\phi &\text{in} \,\,\Omega_s;\\
			\w =\mathbf{0} &\text{on}\,\,\Gamma;\\
			(2\mu_1\mathbb{D}(\w ) +\mu_2(\nabla\cdot \w ) \mathbb{I})\n =\mathbf{0} &\text{on}\,\,\Gamma_s.
		\end{cases}
        \end{equation}
        \end{itemize}

   Applying the standard Lax–Milgram argument for elliptic regularity along with Theorem 3.1 from \cite{boyer2025study} yields the following continuity results :
    \begin{equation}\label{bounded_map_H^1}
        \begin{aligned}
            &\tilde{N}_\Gamma\in \mathcal{L}(\H^{1/2}_{00}(\Gamma;\Omega_f)^\prime ;\H^1(\Omega_f)\times L^2(\Omega_f));\\
            &D_{\Gamma_f}\in \mathcal{L}(\H^{1/2}(\Gamma_f);\H^1(\Omega_f)\times L^2(\Omega_f));\\
            &D_\Gamma \in  \mathcal{L}(\H^{1/2}(\Gamma);\H^1(\Omega_s)).
        \end{aligned}
    \end{equation}

    The spectral properties of the solution operators associated with the Stokes and elasticity systems are governed by specific transcendental equations. These equations, derived from an eigenvalue analysis of the corresponding boundary value problems in an infinite sector of angle $\omega$, determine the singular behavior of solutions near geometric corners. According to the established theory \cite{guo1993regularity,guo2006analytic}, the eigenvalue parameter $\lambda = iz$ satisfies the following characteristic equations for a given angle $\omega \in (0, 2\pi)$:
    \begin{itemize}
        \item [(1)]Stokes Equations with  Dirichlet or Neumann Conditions \cite[Eq.~(4.33a)]{guo2006analytic}:
        \begin{equation}
            \sin ^2(z\omega)=z^2\sin^2(\omega).
        \end{equation}
        \item [(2)] Stokes Equations with Mixed Boundary Conditions \cite[Eq.~(4.33b)]{guo2006analytic}:
        \begin{equation}
            \cos ^2(z\omega)=z^2\sin^2(\omega).
        \end{equation}
        \item[(3)]  Elasticity Equations with Neumann Conditions \cite[Eq.~(4.13)]{guo1993regularity}:
        \begin{equation}
            \sin ^2(z\omega)=z^2\sin^2(\omega).
        \end{equation}
        \item [(4)]Elasticity Equations with Mixed Boundary Conditions \cite[Eq.~(4.14)]{guo1993regularity}:
        \begin{equation}
            \sin ^2 (z\omega)=\frac{4(1-\nu)^2}{3-4\nu}-\frac{z^2}{3-4\nu}\sin ^2 (\omega)\quad \text{with} \quad \ur/\ue=2\nu/(1-2\nu).
        \end{equation}
    \end{itemize}

    Let $\kappa$ denote the smallest positive imaginary part among the non-zero eigenvalues $\lambda = iz$ with $\operatorname{Im}(\lambda)> 0$. More precisely, for each corner of the domain we define the constants
    \begin{align*}
        \kappa_{f,j}&=\min \;\{|\operatorname{Im}(\lambda)|:\cos ^2(z\omega_{f,j})=z^2\sin^2(\omega_{f,j}), \,\lambda=iz
    ,\,\ z \neq 0 \}\quad \text{for}\quad j=0,J_f,\\
    \kappa_{f,j}&=\min \;\{|\operatorname{Im}(\lambda)|:\sin ^2(z\omega_{f,j})=z^2\sin^2(\omega_{f,j}), \,\lambda=iz,\,\ z \neq 0 
    \}\quad \text{for}\quad j=1,\cdots,J_f-1,\\
    \kappa_{s,j}&=\min \;\{|\operatorname{Im}(\lambda)|:\sin ^2 (z\omega_{s,j})=\frac{4(1-\nu)^2}{3-4\nu}-\frac{z^2}{3-4\nu}\sin ^2 (\omega_{s,j}),\,\lambda=iz,\,\ z \neq 0 
    \}\quad \text{for}\quad j=0,J_s,\\
    \kappa_{s,j}&=\min \;\{|\operatorname{Im}(\lambda)|:\sin ^2(z\omega_{s,j})=z^2\sin^2(\omega_{s,j}),\,\lambda=iz,\,\ z \neq 0 
    \}\quad \text{for}\quad j=1,\cdots,J_s-1.\\
    \end{align*}
These quantities determine the admissible range of weight exponents in the weighted Sobolev spaces and thus control the corner regularity of the coupled FSI system.
    
  \begin{lemma}\label{lem:solution_operator}
        Let the weight vectors $\bm{\beta}_f = (\beta_{f,0}, \ldots, \beta_{f,J_f})$ and $\bm{\beta}_s = (\beta_{s,0}, \ldots, \beta_{s,J_s} )$ be chosen such that:
\begin{align*}
& 0 < \beta_{f,j} < 1, \quad \beta_{f,j} > 1 - \kappa_{f,j}, \
& 0 < \beta_{s,j} < 1, \quad \beta_{s,j} > 1 - \kappa_{s,j},
\end{align*}
with the compatibility conditions $ \beta_{f,0}=\beta_{s,0}$ and $\beta_{f,J_f}=\beta_{s,J_s}$. Then, the solution operators defined  in \eqref{solution_operator_N}--\eqref{solution_operator_F_2} are bounded linear maps between the following  weighted Sobolev spaces:
        \begin{align*}
        N_\Gamma&\in \mathcal{L}(\H^{2,2}_{\bbeta_s}(\Omega_s);\H^{2,2}_{\bbeta_f}(\Omega_f)\times H^{1,1}_{\bbeta_f}(\Omega_f)),\\
        D_{\Gamma_f}&\in \mathcal{L}(\H^{3/2,3/2}_{\bbeta_f}(\Gamma_f);\H^{2,2}_{\bbeta_f}(\Omega_f)\times H^{1,1}_{\bbeta_f}(\Omega_f)),\\
        F_1&\in \mathcal{L}(\bm{L}^2_{\bbeta_f}(\Omega_f);\H^{2,2}_{\bbeta_f}(\Omega_f)\times H^{1,1}_{\bbeta_f}(\Omega_f)),\\
		D_\Gamma&\in \mathcal{L}(\H^{2,2}_{\bbeta_f}(\Omega_f);\H^{2,2}_{\bbeta_s}(\Omega_s)),\\
		F_2&\in \mathcal{L}(\bm{L}^2_{\bbeta_s}(\Omega_s);\H^{2,2}_{\bbeta_s}(\Omega_s)).
	\end{align*}
    \end{lemma}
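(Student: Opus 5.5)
The plan is to reduce every mapping property to the weighted $\H^{2,2}_{\bbeta}$ shift theorems of Guo and Babu\v{s}ka for the Stokes system \cite{guo2006analytic} and for the Lam\'e system \cite{guo1993regularity} on polygons. Those theorems state that a variational solution with data in $\bm{L}^2_{\bbeta}$ and boundary data in $\H^{3/2,3/2}_{\bbeta}$ (Dirichlet) or $\H^{1/2,1/2}_{\bbeta}$ (Neumann) lies in $\H^{2,2}_{\bbeta}$, with pressure in $H^{1,1}_{\bbeta}$, whenever $\beta_j>1-\kappa_j$ at every vertex. Here $\kappa_j$ is the exponent attached to that vertex's opening angle and boundary-condition type.

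\emph{Step 1: match the weights to the local problems.} At the interior vertices $A_1,\dots,A_{J_f-1}$ of $\Omega_f$, both adjacent edges carry Dirichlet data. At $A_0$ and $A_{J}$, one edge lies in $\Gamma_f$ (Dirichlet) and the other in $\Gamma$ (Neumann), which is the mixed case. These are exactly the equations defining $\kappa_{f,j}$. The same bookkeeping applies in $\Omega_s$: the interior vertices carry traction--traction conditions, while the endpoints of $\Gamma$ carry Dirichlet--traction, i.e. mixed, conditions. The hypotheses $\beta_{f,j}>1-\kappa_{f,j}$ and $\beta_{s,j}>1-\kappa_{s,j}$ therefore exclude every eigenvalue $\lambda$ with $0<\operatorname{Re}\lambda\le 1-\beta$ from the strip. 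This is precisely the condition under which the weighted shift theorem holds. I would also check that a zero eigenvalue, or the rigid motions in the pure-traction situation, cause no trouble. This is guaranteed because each subproblem has a Dirichlet portion ($\Gamma_f$ or $\Gamma$) of positive measure, so Korn's inequality and Lax--Milgram, via \eqref{bounded_map_H^1}, give a unique $\H^1\times L^2$ (resp.\ $\H^1$) variational solution.

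\emph{Step 2: the force and Dirichlet maps.} For $F_1$ and $F_2$, apply the weighted shift theorem directly with $\psi\in\bm{L}^2_{\bbeta_f}$ and $\phi\in\bm{L}^2_{\bbeta_s}$ and homogeneous boundary data. Note that $\bm{L}^2_{\bbeta}\subset(\H^1_{\Gamma_f})'$ by a weighted Hardy inequality since $\beta<1$, so the variational solution exists. For $D_{\Gamma_f}$, lift $\w\in\H^{3/2,3/2}_{\bbeta_f}(\Gamma_f)$ to $\chi\in\H^{2,2}_{\bbeta_f}(\Omega_f)$ with norm bounded by the trace norm, which is available by the definition of the trace space. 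Subtracting $\chi$ leaves Stokes data $\nabla\cdot(2\D(\chi))\in \bm{L}^2_{\bbeta_f}$, divergence $-\nabla\cdot\chi\in H^{1,1}_{\bbeta_f}$, and Neumann data $-2\D(\chi)\n|_\Gamma\in\H^{1/2,1/2}_{\bbeta_f}(\Gamma)$. All of these are admissible in the inhomogeneous-divergence version of \cite{guo2006analytic}. For $D_\Gamma$, the input $\vv\in\H^{2,2}_{\bbeta_f}(\Omega_f)$ has trace $\vv|_\Gamma\in\H^{3/2,3/2}(\Gamma)$, weighted only at $A_0$ and $A_J$. Because $\beta_{f,0}=\beta_{s,0}$ and $\beta_{f,J_f}=\beta_{s,J_s}$, this trace is also a trace of an $\H^{2,2}_{\bbeta_s}(\Omega_s)$ function. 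I would construct the lift by a reflection or extension across $\Gamma$ localized near each endpoint, keeping the same power weight. After subtracting the lift, the elasticity shift theorem of \cite{guo1993regularity} applies.

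\emph{Step 3: the Neumann map $N_\Gamma$.} For $\w\in\H^{2,2}_{\bbeta_s}(\Omega_s)$, the traction $\si_s(\w)\n$ is built from first derivatives of $\w$, which lie in $\bm{H}^{1,1}_{\bbeta_s}$. Its trace therefore lies in $\H^{1/2,1/2}_{\bbeta_s}(\Gamma)$, and by the same endpoint-weight compatibility it also lies in $\H^{1/2,1/2}_{\bbeta_f}(\Gamma)$. The Stokes shift theorem with mixed data then gives $(\vv,q)\in\H^{2,2}_{\bbeta_f}\times H^{1,1}_{\bbeta_f}$ with the desired bound.

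I expect the main obstacle to be the transfer of trace spaces across $\Gamma$ in Steps 2 and 3. The trace spaces are defined in the paper through liftings into $\Omega_f$ or $\Omega_s$ separately, so one must show that $\H^{k-1/2,k-1/2}_{\bbeta_f}(\Gamma)$ and $\H^{k-1/2,k-1/2}_{\bbeta_s}(\Gamma)$ coincide with equivalent norms when the endpoint weights agree. I would first try to prove this through the intrinsic characterization of these trace spaces in \cite[Section~4]{babuvska1989regularity}, which depends only on the weights at the endpoints of $\Gamma$. Failing that, I would build an explicit extension operator in polar coordinates near $A_0$ and $A_J$ that maps sector to sector and commutes with the power weight $r^{\beta}$.
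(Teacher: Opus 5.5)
Your proposal is correct and takes the same route as the paper. The paper's proof simply applies \cite[Theorem~5.5]{guo2006analytic} and \cite[Theorem~5.2]{guo1993regularity} to each subproblem; it leaves implicit the corner bookkeeping, the Dirichlet liftings, and the transfer of trace spaces across $\Gamma$ under $\beta_{f,0}=\beta_{s,0}$, $\beta_{f,J_f}=\beta_{s,J_s}$, and it asserts this last point without proof in the Remark following the lemma. If you build the sector-to-sector extension in polar coordinates, note that the angular rescaling adds a term of size $r^{-1}|D\vv|$ to the second derivatives. This term is controlled by the $\H^{2,2}_{\bbeta}$ norm via a one-dimensional Hardy inequality in $r$, which holds precisely because $\beta>0$.
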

\begin{proof}
The results follow directly by applying the regularity theory for the Stokes and elasticity systems in polygonal domains, as established in \cite[Theorem 5.5]{guo2006analytic} and \cite[Theorem 5.2]{guo1993regularity}, to the boundary value problems associated with each solution operator.
\end{proof}
\begin{remark} 
    The compatibility conditions $ \beta_{f,0}=\beta_{s,0}$ and $\beta_{f,J_f}=\beta_{s,J_s}$ ensure  the consistency of the weighted Sobolev spaces across the fluid-structure interface $\Gamma$. More precisely:
    \begin{enumerate}
    \item If a function \(\mathbf{v} \in \H^{k,\ell}_{\boldsymbol{\beta}_f}(\Omega_f) \), then its trace on the interface satisfies $$ \mathbf{v}|_{\Gamma} \in \H^{k-1/2,\ell-1/2}_{\boldsymbol{\beta}_f}(\Gamma),$$ where the trace space is determined by the weights \( \beta_{f,i} \)  associated with the interface vertices. Under the compatibility conditions, this trace space coincides with the corresponding weighted trace space induced from the solid side. Consequently, the trace admits an extension \( \mathbf{w} \in \H^{k,\ell}_{\boldsymbol{\beta}_s}(\Omega_s) \).

    \item Conversely, if \( \mathbf{w} \in \H^{k,\ell}_{\boldsymbol{\beta}_s}(\Omega_s) \), then its trace satisfies  $$ \mathbf{w}|_{\Gamma} \in \H^{k-1/2,\ell-1/2}_{\boldsymbol{\beta}_s}(\Gamma), $$ and, owing to the matching of the weights at the interface vertices, this trace can be extended to a function  \( \mathbf{v} \in \H^{k,\ell}_{\boldsymbol{\beta}_f}(\Omega_f) \).
\end{enumerate}
Therefore, the matching of weights across the interface guarantees the mutual trace-extension property between the weighted Sobolev spaces on $\Omega_f$ and $\Omega_s$,  which is essential for the well-posed coupling of the fluid and structure variables.
\end{remark}

Applying the transposition method (see, e.g., \cite{lions2012nonn}) yields the following result.
\begin{lemma}\label{lem:transition_method}
    Let the weight vectors $\bm{\beta}_f = (\beta_{f,0}, \ldots, \beta_{f,J_f})$ and $\bm{\beta}_s = (\beta_{s,0}, \ldots, \beta_{s,J_s} )$ be chosen such that:
\begin{align*}
& 0 < \beta_{f,j} < 1, \quad \beta_{f,j} > 1 - \kappa_{f,j}, \
& 0 < \beta_{s,j} < 1, \quad \beta_{s,j} > 1 - \kappa_{s,j},
\end{align*}
with the compatibility conditions $ \beta_{f,0}=\beta_{s,0}$ and $\beta_{f,J_f}=\beta_{s,J_s}$. Here, the subscript $(\cdot)_1$ denotes the first component of the solution pair returned by the respective operators. Then, the solution operators $D_{\Gamma_f}$ defined in \eqref{solution_operator_Df} and $D_\Gamma$ defined in \eqref{eq:D_Gamma} satisfy the following mapping properties:
\begin{equation}
    (D_{\Gamma_f})_1\in \mathcal{L}(\H^{1/2,1/2}_{\bbeta_f,00}(\Gamma_f;\Omega_f)^\prime ;\L^2(\Omega_f)),\quad D_\Gamma \in \mathcal{L}(\H^{1/2,1/2}_{\bbeta_s,00}(\Gamma;\Omega_s)^\prime ;\L^2(\Omega_s)).
\end{equation}
\end{lemma}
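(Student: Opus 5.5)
The plan is to prove Lemma~\ref{lem:transition_method} by the transposition method: define $(D_{\Gamma_f})_1\w$ for rough data $\w$ by duality against a regular adjoint problem, and deduce the bound from the weighted $\H^{2,2}$ regularity of that adjoint problem (Lemma~\ref{lem:solution_operator}). I treat the fluid operator first; the solid operator $D_\Gamma$ follows by the same scheme.

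\textbf{Adjoint problem for $(D_{\Gamma_f})_1$.} For $\bm{\phi}\in\L^2(\Omega_f)$, let $(\bm{\psi},\pi)=F_1\bm{\phi}$. This is the Stokes problem with source $\bm{\phi}$, homogeneous Dirichlet data on $\Gamma_f$ and homogeneous Neumann data on $\Gamma$, and it is formally self-adjoint. Since $\beta_{f,j}>0$, the space $\L^2(\Omega_f)$ embeds continuously in $\bm{L}^2_{\bbeta_f}(\Omega_f)$. Lemma~\ref{lem:solution_operator} then gives
\[
\|\bm{\psi}\|_{\H^{2,2}_{\bbeta_f}(\Omega_f)}+\|\pi\|_{H^{1,1}_{\bbeta_f}(\Omega_f)}\le C\|\bm{\phi}\|_{\L^2(\Omega_f)}.
\]

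\textbf{Green's formula and the duality bound.} For smooth data $\w$ with $(\vv,q)=D_{\Gamma_f}\w$, Green's formula for the Stokes operator yields
\[
(\vv,\bm{\phi})_{\Omega_f}=-\langle \w,(2\D(\bm{\psi})-\pi\mathbb{I})\n\rangle_{\Gamma_f}.
\]
The terms on $\Gamma$ vanish because both tractions are zero there, and $\bm{\psi}=\mathbf{0}$ on $\Gamma_f$.

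The key claim is that the conormal trace $\mathbf{g}:=(2\D(\bm{\psi})-\pi\mathbb{I})\n|_{\Gamma_f}$ lies in $\H^{1/2,1/2}_{\bbeta_f,00}(\Gamma_f;\Omega_f)$, with norm bounded by $C\|\bm{\phi}\|_{\L^2}$. The entries of $\D(\bm{\psi})$ and $\pi$ lie in $H^{1,1}_{\bbeta_f}(\Omega_f)$. Extending $\n$ piecewise smoothly gives a lifting in $H^{1,1}_{\bbeta_f}$. To make the lifting vanish on $\Gamma$, subtract a lifting of the traction on $\Gamma$, which is zero. Equivalently, use the definition of the $00$-space together with the fact that the full stress vector field has vanishing trace on $\Gamma$.

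Granting this claim, the pairing gives $|(\vv,\bm{\phi})|\le C\|\w\|_{(\H^{1/2,1/2}_{\bbeta_f,00})'}\|\bm{\phi}\|_{\L^2}$. Taking the supremum over $\bm{\phi}$ yields the $\L^2$ bound. By density of smooth data in the dual space, this bound defines $(D_{\Gamma_f})_1$ as a bounded extension. Consistency with the $\H^1$ theory in \eqref{bounded_map_H^1} follows from uniqueness.

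\textbf{Solid operator $D_\Gamma$.} The argument is the same, with the adjoint problem $\bm{\zeta}=F_2\bm{\phi}$ for $\bm{\phi}\in\L^2(\Omega_s)$. This problem is again self-adjoint, with homogeneous Dirichlet data on $\Gamma$ and homogeneous Neumann data on $\Gamma_s$. Green's formula gives
\[
(D_\Gamma\vv,\bm{\phi})_{\Omega_s}=-\langle\vv,\si_s(\bm{\zeta})\n\rangle_\Gamma,
\]
and $\si_s(\bm{\zeta})\n$ lies in $\H^{1/2,1/2}_{\bbeta_s,00}(\Gamma;\Omega_s)$ because the traction vanishes on $\Gamma_s$. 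Lemma~\ref{lem:solution_operator} bounds $\|\bm{\zeta}\|_{\H^{2,2}_{\bbeta_s}}$ by $C\|\bm{\phi}\|_{\L^2}$.

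\textbf{Main obstacle.} The hardest step is the trace claim: showing rigorously that the conormal derivative of an $\H^{2,2}_{\bbeta}$ solution defines an element of the weighted $00$-space, and that smooth functions are dense in its dual. Near the corners $A_0$ and $A_J$ the normal $\n$ jumps and the weights are singular. I would handle this by localizing with the cutoff domains $D_i$ from \eqref{local_cone}. On each sector, build the lifting from the stress field itself, multiplied by a fixed smooth extension of each edge normal and by a cutoff separating the two edges. I would check that multiplying by these bounded, angularly dependent factors preserves $H^{1,1}_{\bbeta}$; this holds because the gradient of such a factor is $O(r^{-1})$, which is compensated by the weight $\Phi_{\bbeta}$ and the $H^{1,1}_{\bbeta}\subset$ $r^{\beta-1}L^2$ Hardy-type inequality.
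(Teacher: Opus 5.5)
Your proposal is correct and follows essentially the same route as the paper: transposition against the adjoint problems $F_1\phi$ and $F_2\xi$, Green's formula reducing $(\vv,\phi)_{\Omega_f}$ to a pairing of the data with the conormal trace of the adjoint solution, the weighted bound from Lemma~\ref{lem:solution_operator} via $\L^2\subset\L^2_{\bbeta}$, and a duality estimate. The differences are minor. You extend from smooth data by density where the paper invokes the Riesz representation theorem, and you test against all of $\L^2(\Omega_f)$ rather than only divergence-free $\phi$, which is legitimate since $\nabla\cdot\bm{\psi}=0$ is built into the adjoint problem. Your angular-cutoff and Hardy argument for the $00$-trace claim is also more careful than the paper's bare appeal to the trace theorem.
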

\begin{proof}
   Let $g \in \H^{1/2,1/2}_{\bbeta_f,00}(\Gamma_f;\Omega_f)^\prime$.
For the Stokes problem $(\vv, q) = D_{\Gamma_f}g$, consider the corresponding adjoint problem:
    \begin{equation}\label{adjoint_stokes}
        \begin{cases}
            -\nabla\cdot (2\mathbb{D}(\Psi ) - \pi \mathbb{I})=\phi &\text{in}\,\,\Omega_f;\\
			\nabla\cdot \Psi=0 &\text{in}\,\,\Omega_f;\\
			\Psi  =\mathbf{0}&\text{on}\,\,\Gamma_f;\\
			(2\mathbb{D}(\Psi ) - \pi  \mathbb{I})\mathbf{n}=\mathbf{0} &\text{on}\,\,\Gamma;
        \end{cases}
    \end{equation}
    where $\phi \in \L^2(\Omega_f)$ satisfies $\nabla \cdot \phi = 0$.

    Similarly, let $h \in \H^{1/2,1/2}_{\bbeta_s,00}(\Gamma;\Omega_s)^\prime$.  
For the elasticity problem $\w = D_\Gamma h$, consider the adjoint system:
    \begin{equation}\label{adjoint_elasticity}
        \begin{cases}
			-\nabla\cdot (2\mu_1\mathbb{D}(\Phi  ) +\mu_2(\nabla\cdot \Phi  ) \mathbb{I})=\xi &\text{in} \,\,\Omega_s;\\
			\Phi  =\mathbf{0} &\text{on}\,\,\Gamma;\\
			(2\mu_1\mathbb{D}(\Phi  ) +\mu_2(\nabla\cdot \Phi ) \mathbb{I})\n=\mathbf{0}  &\text{on}\,\,\Gamma_s.
        \end{cases}
    \end{equation}
    with $\xi \in \L^2(\Omega_s)$.
    
    Since $\L^2(\Omega_f)\subset \L^2_{\bbeta_f}(\Omega_f)$ and $\L^2(\Omega_s)\subset \L^2_{\bbeta_s}(\Omega_s)$, Lemma \ref{lem:solution_operator} implies that  the solution $(\Psi,\pi) =F_1(\phi)$ belongs to $ \H^{2,2}_{\bbeta_f}(\Omega_f)\times H^{1,1}_{\bbeta_f}(\Omega_f)$ and $\Phi=F_2(\xi)$ belongs to $ \H^{2,2}_{\bbeta_s}(\Omega_s) $, with the estimate
    \begin{align}\label{adjoint_stokes_solution}
        \|\Psi\|_{\H^{2,2}_{\bbeta_f}(\Omega_f)}+\|\pi \|_{H^{1,1}_{\bbeta_f}(\Omega_f)}&\leq C\|\phi\|_{\L^2(\Omega_f)},\\
       \|\Phi\|_{\H^{2,2}_{\bbeta_s}(\Omega_s)}&\leq C\|\xi\|_{\L^2(\Omega_s)}.
    \end{align}

   It follows that $\vv = (D_{\Gamma_f})_1(g)$  satisfies
   \begin{align}
       (\vv,\phi)_{\Omega_f}&=-\int_{\Omega_f}\vv \cdot (\nabla\cdot (2\mathbb{D}(\Psi ) - \pi \mathbb{I}))\notag\\
       &=-\int_{\Gamma_f } g \cdot (2\mathbb{D}(\Psi ) - \pi  \mathbb{I})\mathbf{n}=:L(\phi), \qquad \forall \phi \in \L^2(\Omega_f), \ \nabla \cdot \phi = 0,
   \end{align}
   where $(\Psi,\pi)$ solves \eqref{adjoint_stokes}.

   By the trace theorem and \eqref{adjoint_stokes_solution}, we have $(2\mathbb{D}(\Psi ) - \pi  \mathbb{I})\mathbf{n}|_{\Gamma_f}\in \H^{1/2,1/2}_{\bbeta_f,00}(\Gamma_f;\Omega_f)$ and therefore,
    \begin{align}\label{operator_L_stokes}
        |L(\phi)|&\leq C \|g\|_{\H_{\bbeta_f,00}^{1/2,1/2}(\Gamma_f;\Omega_f)^\prime}\big(\|\Psi\|_{\H^{2,2}_{\bbeta_f}(\Omega_f)}+\|\pi \|_{H^{1,1}_{\bbeta_f}(\Omega_f)}\big)\notag\\
        &\leq C \|g\|_{\H_{\bbeta_f,00}^{1/2,1/2}(\Gamma_f;\Omega_f)^\prime}\|\phi\|_{\L^2(\Omega_f)}.
    \end{align}
    By the Riesz representation theorem, there exists a unique $\vv \in \L^2(\Omega_f)$, such that $(\vv,\phi)_{\Omega_f}=L(\phi)$ and 
    \begin{equation}\label{estimate_L2_stokes}
        \|\vv \|_{\L^2(\Omega_f)}\leq \|L\| \leq C \|g\|_{\H_{\bbeta_f,00}^{1/2,1/2}(\Gamma_f;\Omega_f)^\prime}.
    \end{equation}
    Similarly, $\w = D_{\Gamma} h$  satisfies
   \begin{align}
       (\w,\xi )_{\Omega_s}&=-\int_{\Omega_s}\w  \cdot (\nabla\cdot (2\mu_1\mathbb{D}(\Phi  ) +\mu_2(\nabla\cdot \Phi  ) \mathbb{I}))\notag\\
       &=-\int_{\Gamma } h \cdot (2\mu_1\mathbb{D}(\Phi  ) +\mu_2(\nabla\cdot \Phi  ) \mathbb{I})\mathbf{n}=:L_1(\xi),\qquad \forall\xi \in \L^2{(\Omega_s)},
   \end{align}
   where  $\Phi$ solves \eqref{adjoint_elasticity}.   Repeating the same argument as in \eqref{operator_L_stokes}--\eqref{estimate_L2_stokes},  we conclude that there exists a unique $\w \in \L^2(\Omega_s)$, such that $(\w,\xi )_{\Omega_s}=L_1(\xi)$ and 
   \begin{equation}\label{estimate_L2_elasticity}
        \|\w \|_{\L^2(\Omega_s)}\leq  C \|h\|_{\H_{\bbeta_s,00}^{1/2,1/2}(\Gamma;\Omega_s)^\prime}.
    \end{equation}
\end{proof}

Assume the conditions of Lemma \ref{lem:solution_operator}. For any $\phi \in \H^{1/2,1/2}_{\bbeta_f,00}(\Gamma;\Omega_f)$, let $(\Lambda_\phi, \pi_\phi)$ denote the solution of the  Stokes system:
\begin{equation}\label{trace_operator_Df}
            \begin{cases}
			-\nabla\cdot (2\mathbb{D}(\Lambda) - \pi \mathbb{I})=\mathbf{0} &\text{in}\,\,\Omega_f;\\
			\nabla\cdot \Lambda =0 &\text{in}\,\,\Omega_f;\\
			 \Lambda  =\mathbf{0} &\text{on}\,\,\Gamma_f;\\
			 (2\mathbb{D}(\Lambda) - \pi \mathbb{I})\n =\phi &\text{on}\,\,\Gamma.
		\end{cases}
        \end{equation}
    By the result in \cite[Theorem~5.5]{guo2006analytic}, it follows that  $(\Lambda_\phi,\pi_\phi)\in \H^{2,2}_{\bbeta_f}(\Omega_f)\times H^{1,1}_{\bbeta_f}(\Omega_f)$ with
    \begin{equation}\label{test_Lambda}
        \|\Lambda_\phi\|_{ \H^{2,2}_{\bbeta_f}(\Omega_f)}+\|\pi_\phi\|_{H^{1,1}_{\bbeta_f}(\Omega_f)}\leq C\|\phi\|_{\H^{1/2,1/2}_{\bbeta_f,00}(\Gamma;\Omega_f)}.
    \end{equation}
    In fact, defining $\Sigma_\phi := (2\mathbb{D}(\Lambda_\phi) - \pi_\phi \mathbb{I})\n \in \H^{1,1}_{\bbeta_f}(\Omega_f)$, we have $\Sigma_\phi|_{\Gamma_f} \in \H^{1/2,1/2}_{\bbeta_f,00}(\Gamma_f;\Omega_f)$  whenever $\phi \in \H^{1/2,1/2}_{\bbeta_f,00}(\Gamma;\Omega_f)$. To see this, let $\chi$ be a cut off function such that $\chi =1$ on $\Gamma_f$  and $\chi =0$ on $\Gamma$. Then
    \begin{equation}
        \chi \Sigma_\phi =\Sigma_\phi(\chi-1)+\Sigma_\phi \in \H^{1,1}_{\bbeta_f}(\Omega_f) 
    \end{equation}
    with $\chi \Sigma_\phi=0$ on $\Gamma$ and $\chi \Sigma_\phi=\Sigma_\phi|_{\Gamma_f}$ on $\Gamma_f$.

    By Lemma \ref{lem:transition_method}, define $X(D_{\Gamma_f}):=\{(D_{\Gamma_f})_1(g)\in \L^2(\Omega_f): g \in \H^{1/2,1/2}_{\bbeta_f,00}(\Gamma_f;\Omega_f)^\prime\}$,  endowed with the norm
    \[
    \|\vv\|_{X(D_{\Gamma_f})}:=\|\vv\|_{\L^2(\Omega_f)}+\|\vv|_{\Gamma_f}\|_{\H^{1/2,1/2}_{\bbeta_f,00}(\Gamma_f;\Omega_f)^\prime}.
    \]
    
    For $\vv \in X(D_{\Gamma_f})$ we define its trace on $\Gamma $ by transposition:
    \begin{equation}\label{eq:definition_trace_L^2}
        \langle \vv, \phi \rangle_{\H^{1/2,1/2}_{\bbeta_f,00}(\Gamma;\Omega_f)^\prime,\H^{1/2,1/2}_{\bbeta_f,00}(\Gamma;\Omega_f)}:= -\langle \vv|_{\Gamma_f}, (2\mathbb{D}(\Lambda_\phi ) - \pi_\phi  \mathbb{I})\n \rangle_{\H^{1/2,1/2}_{\bbeta_f,00}(\Gamma_f;\Omega_f)^\prime,\H^{1/2,1/2}_{\bbeta_f,00}(\Gamma_f;\Omega_f)}
    \end{equation}
for each $\phi \in \H^{1/2,1/2}_{\bbeta_f,00}(\Gamma;\Omega_f)$.
        
        Consequently, the trace $\vv|_{\Gamma} \in \H^{1/2,1/2}_{\bbeta_f,00}(\Gamma;\Omega_f)'$ is well defined.
We denote this trace operator by $\gamma_{\Gamma}(\vv)$ for $\vv \in X(D_{\Gamma_f})$.

Based on this definition, we have the following result.

\begin{lemma}\label{lem:trace_L2}
    Let the weight vectors $\bm{\beta}_f = (\beta_{f,0}, \ldots, \beta_{f,J_f})$ and $\bm{\beta}_s = (\beta_{s,0}, \ldots, \beta_{s,J_s} )$ be chosen such that:
\begin{align*}
& 0 < \beta_{f,j} < 1, \quad \beta_{f,j} > 1 - \kappa_{f,j}, \
& 0 < \beta_{s,j} < 1, \quad \beta_{s,j} > 1 - \kappa_{s,j},
\end{align*}
with the compatibility conditions $ \beta_{f,0}=\beta_{s,0}$ and $\beta_{f,J_f}=\beta_{s,J_s}$. Then, the linear operator $\gamma_{\Gamma}: X(D_{\Gamma_f})\rightarrow \H^{1/2,1/2}_{\bbeta_f,00}(\Gamma;\Omega_f)^\prime$ is bounded.
\end{lemma}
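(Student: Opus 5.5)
The plan is to bound the functional defined by transposition in \eqref{eq:definition_trace_L^2} directly. Fix $\vv\in X(D_{\Gamma_f})$ and $\phi\in\H^{1/2,1/2}_{\bbeta_f,00}(\Gamma;\Omega_f)$. By duality it suffices to show
\[
|\langle \gamma_\Gamma(\vv),\phi\rangle|\le C\,\|\vv\|_{X(D_{\Gamma_f})}\,\|\phi\|_{\H^{1/2,1/2}_{\bbeta_f,00}(\Gamma;\Omega_f)},
\]
with $C$ independent of $\vv$ and $\phi$. Linearity of $\gamma_\Gamma$ is immediate: the map $\phi\mapsto(\Lambda_\phi,\pi_\phi)$ is linear, and the pairing on $\Gamma_f$ is bilinear. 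The argument therefore reduces to a norm estimate of $\Sigma_\phi|_{\Gamma_f}$ in $\H^{1/2,1/2}_{\bbeta_f,00}(\Gamma_f;\Omega_f)$ in terms of $\phi$.

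First, apply the definition of the dual norm to \eqref{eq:definition_trace_L^2}:
\[
|\langle \gamma_\Gamma(\vv),\phi\rangle|\le \|\vv|_{\Gamma_f}\|_{\H^{1/2,1/2}_{\bbeta_f,00}(\Gamma_f;\Omega_f)'}\;\|\Sigma_\phi|_{\Gamma_f}\|_{\H^{1/2,1/2}_{\bbeta_f,00}(\Gamma_f;\Omega_f)}.
\]
The first factor is bounded by $\|\vv\|_{X(D_{\Gamma_f})}$ by the definition of that norm.

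Second, bound $\Sigma_\phi|_{\Gamma_f}$ using the cut-off construction given just before the lemma. The function $\chi\Sigma_\phi$ lies in $\H^{1,1}_{\bbeta_f}(\Omega_f)$, vanishes on $\Gamma$, and equals $\Sigma_\phi$ on $\Gamma_f$. It is therefore an admissible lifting in the infimum defining the trace norm, which gives
\[
\|\Sigma_\phi|_{\Gamma_f}\|_{\H^{1/2,1/2}_{\bbeta_f,00}(\Gamma_f;\Omega_f)}\le \|\chi\Sigma_\phi\|_{\H^{1,1}_{\bbeta_f}(\Omega_f)}\le C_\chi\|\Sigma_\phi\|_{\H^{1,1}_{\bbeta_f}(\Omega_f)}.
\]
Multiplication by a smooth cut-off is bounded on $\H^{1,1}_{\bbeta_f}$: the product rule only adds the term $\Phi_{\bbeta_f}|\nabla\chi||\Sigma_\phi|$, which is controlled by $\|\Sigma_\phi\|_{\L^2}$ because $\Phi_{\bbeta_f}$ is bounded.

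Third, estimate $\|\Sigma_\phi\|_{\H^{1,1}_{\bbeta_f}(\Omega_f)}\le C(\|\Lambda_\phi\|_{\H^{2,2}_{\bbeta_f}}+\|\pi_\phi\|_{H^{1,1}_{\bbeta_f}})$. Here $\n$ is understood as a fixed smooth extension of the normal field, constant on each edge, localised near the corresponding edge. Each first derivative of $\Sigma_\phi$ then involves second derivatives of $\Lambda_\phi$ and first derivatives of $\pi_\phi$ weighted by $\Phi_{\bbeta_f}$, plus lower-order terms. Those lower-order terms are $\L^2$ norms of $\nabla\Lambda_\phi$ and $\pi_\phi$, which are controlled by the $\H^{2,2}_{\bbeta_f}$ and $H^{1,1}_{\bbeta_f}$ norms through the $H^1$ and $L^2$ parts of those norms. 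Combining this with \eqref{test_Lambda} yields
\[
\|\Sigma_\phi|_{\Gamma_f}\|\le C\|\phi\|_{\H^{1/2,1/2}_{\bbeta_f,00}(\Gamma;\Omega_f)},
\]
and the claimed bound follows, so $\gamma_\Gamma$ is bounded.

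The main obstacle is the second and third steps near the corners $A_0$ and $A_J$, where $\Gamma$ and $\Gamma_f$ meet. Two things must be checked there. First, the normal field cannot be extended smoothly across a corner, so $\Sigma_\phi$ has to be assembled edge by edge with a partition of unity. Second, the cut-off $\chi$ cannot be smooth at a corner where it must equal $1$ on $\Gamma_f$ and $0$ on $\Gamma$. I would handle this by taking $\chi$ homogeneous of degree zero in the local polar angle near $A_0$ and $A_J$. Then $|\nabla\chi|\lesssim r^{-1}$, and the extra term $\Phi_{\bbeta_f}r^{-1}|\Sigma_\phi|$ must be controlled by a weighted Hardy inequality. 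That inequality should hold because $\beta_{f,j}\in(0,1)$, using $\Phi_{\bbeta_f-1}\Sigma_\phi\in\L^2$, which is available for $\H^{1,1}_{\bbeta_f}$ functions when $\beta>0$. This is the step I would verify most carefully; the rest is bookkeeping.
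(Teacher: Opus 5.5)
Your proposal is correct and follows the paper's proof. You bound the transposition pairing by $\|\vv|_{\Gamma_f}\|_{\H^{1/2,1/2}_{\bbeta_f,00}(\Gamma_f;\Omega_f)'}$ times the $\H^{1/2,1/2}_{\bbeta_f,00}(\Gamma_f;\Omega_f)$-norm of $\Sigma_\phi|_{\Gamma_f}$, and control the latter through the cut-off lifting and \eqref{test_Lambda}. Your extra care at the corners fills in what the paper glosses over: the degree-zero angular cut-off together with the Hardy bound $\|r^{\beta-1}u\|_{L^2}\lesssim\|u\|_{H^{1,1}_{\bbeta_f}}$ (valid since $\beta_{f,j}>0$) works, and the same device handles the jump of $\n$ at $A_1,\dots,A_{J-1}$, which is where that issue actually arises.
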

\begin{proof}
    Let $\vv \in X(D_{\Gamma_f})$. For any $\phi \in \H^{1/2,1/2}_{\bbeta_f,00}(\Gamma;\Omega_f)$, we have
    \begin{align}
        \langle \vv, \phi \rangle_{\H^{1/2,1/2}_{\bbeta_f,00}(\Gamma;\Omega_f)^\prime,\H^{1/2,1/2}_{\bbeta_f,00}(\Gamma;\Omega_f)}&=-\langle \vv|_{\Gamma_f},(2\mathbb{D}(\Lambda_\phi ) - \pi_\phi  \mathbb{I})\n  \rangle_{\H^{1/2,1/2}_{\bbeta_f,00}(\Gamma_f;\Omega_f)^\prime,\H^{1/2,1/2}_{\bbeta_f,00}(\Gamma_f;\Omega_f)}\notag\\
        &\leq \|\vv\|_{\H^{1/2,1/2}_{\bbeta_f,00}(\Gamma_f;\Omega_f)^\prime}\|(2\mathbb{D}(\Lambda_\phi ) - \pi_\phi  \mathbb{I})\n \|_{\H^{1/2,1/2}_{\bbeta_f,00}(\Gamma_f;\Omega_f)}\notag\\
        &\leq \|\vv\|_{X(D_{\Gamma_f})}\|\phi\|_{\H^{1/2,1/2}_{\bbeta_f,00}(\Gamma;\Omega_f)}\hspace{40pt}\text{(\eqref{test_Lambda} is used)}.
    \end{align}
    Hence, $\gamma_{\Gamma}$ is a bounded linear operator.
\end{proof}

    \subsection{Proof of Theorem \ref{thm:weighted_H2}}\label{sec:proof_1}

We first establish the existence of a weak solution to the fluid-structure interaction (FSI) problem \eqref{eq:stoke_elasticity_t}. Our approach follows the strategy of  \cite[Theorem 2.3]{du2004semidiscrete}, where the case of homogeneous Dirichlet boundary data ($\mathbf{u}_{in} = 0$) is treated.  As a first step, we reduce the original problem with nonhomogeneous Dirichlet boundary data $\mathbf{u}_{in}$ to an equivalent formulation with homogeneous Dirichlet conditions.  Building on this reduction, we then apply standard semigroup theory to derive $\H^{2,2}_{\bbeta}$-regularity for the FSI system posed in polygonal domains with interface-boundary corner singularities.

    \medskip
    To proceed, we first recall a preliminary result concerning normal traces.
    
     For a general $\L^2$-function, its trace on the boundary is not well defined.  However, a normal trace can be rigorously defined for functions in $ \H(\mathrm{div},\Omega_i) $. 
     Let $\Gamma_e $ be a subset of $\partial\Omega_i$
 and $\n$ denote the unit outward normal vector. 
 
    For any $\vv \in \H(\mathrm{div},\Omega_i) $, its normal trace $\vv\cdot \n$ on $\Gamma_e$ is defined in the dual sense via Green's formula: for all $\psi\in H_{00}^{1/2}(\Gamma_e;\Omega_i)$,
    \begin{equation}\label{definition:normal_trace}
        \langle \vv  \cdot \n,\psi \rangle _{H_{00}^{1/2}(\Gamma_e;\Omega_i)^\prime ,H^{1/2}_{00}(\Gamma_e;\Omega_i)}:=(\nabla\cdot \vv,w)_{\Omega_i}+( \vv,\nabla w)_{\Omega_i},
    \end{equation}
    where $w\in H^1(\Omega_i)$ is any lifting satisfying
    \begin{equation}\label{lift_w}
        w|_{\Gamma_e}=\psi,\quad w|_{\partial\Omega_i\setminus\Gamma_e}=0,\quad \|w\|_{H^1(\Omega_i)}\leq C\|\psi\|_{H^{1/2}_{00}(\Gamma_e;\Omega_i)}.
    \end{equation}
    
This construction yields the following lemma.
\begin{lemma}\label{lem:gamma_n}
    Let $\vv\in \H(\mathrm{div},\Omega_i)$. Then the normal trace operator $\gamma_{\n;\Gamma_e}(\vv):=\vv\cdot \n\in H^{1/2}_{00}(\Gamma_e;\Omega_i)^\prime$ defined by \eqref{definition:normal_trace} is well defined and bounded. In particular,
    \[\gamma_{{\n;\Gamma_e}}:\H(\mathrm{div},\Omega_i)\rightarrow H^{1/2}_{00}(\Gamma_e;\Omega_i)^\prime\] is a continuous linear operator.
    \end{lemma}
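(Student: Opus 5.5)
The plan is the standard argument for normal traces of $\H(\mathrm{div})$ fields, in three steps: show the definition \eqref{definition:normal_trace} does not depend on the lifting $w$, bound the resulting functional in the dual norm, and check linearity.

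\textbf{Independence of the lifting.} Take two liftings $w_1,w_2\in H^1(\Omega_i)$ of the same $\psi$ satisfying \eqref{lift_w}. Their difference $z:=w_1-w_2$ lies in $H^1(\Omega_i)$ and has zero trace on all of $\partial\Omega_i$, so $z\in H^1_0(\Omega_i)$. Choose $z_n\in C_c^\infty(\Omega_i)$ with $z_n\to z$ in $H^1(\Omega_i)$. The distributional definition of $\nabla\cdot\vv\in L^2(\Omega_i)$ gives
\begin{equation*}
(\nabla\cdot \vv,z_n)_{\Omega_i}+(\vv,\nabla z_n)_{\Omega_i}=0 .
\end{equation*}
Both terms are continuous in $z_n$ for the $H^1$ topology, since $\vv,\nabla\cdot\vv\in L^2$. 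Passing to the limit gives $(\nabla\cdot\vv,z)_{\Omega_i}+(\vv,\nabla z)_{\Omega_i}=0$. Hence the right-hand side of \eqref{definition:normal_trace} is the same for $w_1$ and $w_2$. This step requires only that the zero-trace subspace of $H^1$ equals $H^1_0$, which holds on the Lipschitz polygonal subdomains $\Omega_i$. It also shows the pairing is linear in $\psi$: liftings of $\psi_1$ and $\psi_2$ sum to a lifting of $\psi_1+\psi_2$, and the value does not depend on which lifting is used.

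\textbf{Existence of a bounded lifting and the estimate.} By the definition of $H^{1/2}_{00}(\Gamma_e;\Omega_i)$, every $\psi$ in it has some $w\in H^1(\Omega_i)$ with $w|_{\Gamma_e}=\psi$ and $w|_{\partial\Omega_i\setminus\Gamma_e}=0$. The set of such $w$ is a closed affine subspace of $H^1(\Omega_i)$. Take its element of minimal $H^1$ norm and use that norm as the $H^{1/2}_{00}$ norm; an equivalent norm only changes the constant $C$. This yields \eqref{lift_w}. Cauchy–Schwarz then gives
\begin{equation*}
|\langle \vv\cdot\n,\psi\rangle|\le \|\nabla\cdot\vv\|_{L^2}\|w\|_{L^2}+\|\vv\|_{\L^2}\|\nabla w\|_{\L^2}\le C\|\vv\|_{\H(\mathrm{div},\Omega_i)}\|\psi\|_{H^{1/2}_{00}(\Gamma_e;\Omega_i)} .
\end{equation*}
So $\vv\cdot\n$ is a bounded linear functional, i.e. it lies in $H^{1/2}_{00}(\Gamma_e;\Omega_i)'$, and $\|\gamma_{\n;\Gamma_e}\vv\|\le C\|\vv\|_{\H(\mathrm{div},\Omega_i)}$.

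\textbf{Linearity and consistency.} Linearity of $\gamma_{\n;\Gamma_e}$ in $\vv$ is immediate from the formula. Together with the estimate, this gives continuity. For smooth $\vv$ (say $\vv\in \H^1(\Omega_i)$), Green's formula shows the definition coincides with $\int_{\Gamma_e}(\vv\cdot\n)\psi$, so the name "normal trace" is consistent.

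The only delicate point is the well-definedness step, specifically the identification of zero-trace $H^1$ functions with $H^1_0$. That identification is standard on Lipschitz domains, so I expect no real obstacle.
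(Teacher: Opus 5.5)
Your proof is correct and is essentially the argument the paper relies on. The paper only cites Temam's Theorem~1.2, whose proof is this same scheme: independence of the lifting via $H^1_0(\Omega_i)$ and the distributional divergence, then Cauchy--Schwarz with a bounded lifting, transplanted from $H^{1/2}(\partial\Omega_i)$ to $H^{1/2}_{00}(\Gamma_e;\Omega_i)$.

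The one point to state carefully is your norm-equivalence remark. It holds for the standard (minimal-lifting, Lions--Magenes) norm on $H^{1/2}_{00}(\Gamma_e;\Omega_i)$, which is what \eqref{lift_w} tacitly requires. It does not hold for the $H^{1/2}(\Gamma_e)$ norm literally named in the paper's definition, and under that norm neither \eqref{lift_w} nor the lemma can hold.
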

    \begin{proof}
        The well-definedness and boundedness of $\gamma_{\n;\Gamma_e}$ follow by arguments identical to those in \cite[Theorem 1.2]{temam2024navier}.
    \end{proof}

    \medskip
    We now turn to the proof of Theorem~\ref{thm:weighted_H2}.
    \begin{proof}
    \textbf{Step 1: Existence of a weak solution.}
    
	       Let $q_0$ be defined by \eqref{def:q} in terms of the initial data $(\u_0,\e_0)$, so that $q_0$ depends only on $(\u_0,\e_0)\in\X_{\bbeta_f,\bbeta_s}$.
        
    Assume that $(\u^*, p^*, \e^*)$ is a solution to the following system:
	\begin{align}
		\text{PDEs}&\begin{cases}
			-\nabla\cdot (2\mathbb{D}(\mathbf{u}^*) - p^* \mathbb{I})=\f^* &\text{in}\,\,\Omega_f\times (0,T],\\
			\nabla\cdot \mathbf{u^*}=0 &\text{in}\,\,\Omega_f\times (0,T],\\
			-\nabla\cdot (2\mu_1\mathbb{D}(\e^* ) +\mu_2(\nabla\cdot \e ^*) \mathbb{I})=\h^*&\text{in} \,\,
			\Omega_s\times (0,T],
		\end{cases}\label{eq:pde_*}\\
		\text{B.C.}&\begin{cases}
			\u^*=\u_{in}&\text{on}\,\,\Gamma_f\times (0,T],\\
			\partial_t\bm{\eta}^*=\mathbf{u}^*&\text{on}\,\,\Gamma\times (0,T],\\
			(2\mathbb{D}(\mathbf{u}^*) - p^* \mathbb{I})\mathbf{n}=(2\mu_1\mathbb{D}(\e ^*) +\mu_2(\nabla\cdot \e^* ) \mathbb{I})\n &\text{on}\,\,\Gamma\times (0,T],\\
			(2\mu_1\mathbb{D}(\e ^*) +\mu_2(\nabla\cdot \e^* ) \mathbb{I})\n =\mathbf{0}&\text{on}\,\,\Gamma_s\times (0,T],
		\end{cases}\label{eq:pde_*_BC}\\
		\text{I.C.} &\e|_{t=0}=\e_0,\label{eq:pde_*_IC}
	\end{align}
	where \begin{equation}\label{eq:f_h_*}
		\begin{cases}
			&\f^*=-\nabla\cdot (2\mathbb{D}(\mathbf{u}_0) - q_0 \mathbb{I})\in \L^2(\Omega_f),\\
			&\h^*=-\nabla\cdot (2\mu_1\mathbb{D}(\e_0 ) +\mu_2(\nabla\cdot \e_0 ))\in \L^2(\Omega_s).
		\end{cases}
	\end{equation}
	
	This system can be decoupled by viewing it as two sub-problems that are coupled through their interface conditions:
    \begin{itemize}
        \item [1.] The Stokes sub-problem in $\Omega_f$:
        \begin{equation}
            \begin{cases}
			-\nabla\cdot (2\mathbb{D}(\mathbf{u}^*) - p^* \mathbb{I})=\f^* &\text{in}\,\,\Omega_f\times (0,T],\\
			\nabla\cdot \mathbf{u}^*=0 &\text{in}\,\,\Omega_f\times (0,T],\\
            \u^*=\u_{in}&\text{on}\,\,\Gamma_f\times (0,T],\\
            (2\mathbb{D}(\mathbf{u}^*) - p ^*\mathbb{I})\mathbf{n}=(2\mu_1\mathbb{D}(\e^* ) +\mu_2(\nabla\cdot \e^* ) \mathbb{I})\n &\text{on}\,\,\Gamma\times (0,T],
		\end{cases}
        \end{equation}
        \item [2.] The Elasticity sub-problem in $\Omega_s$:
        \begin{equation}
            \begin{cases}
			-\nabla\cdot (2\mu_1\mathbb{D}(\e^* ) +\mu_s(\nabla\cdot \e ^*) \mathbb{I})=\h^*&\text{in} \,\,
			\Omega_s\times (0,T],\\
            \e^*=\int_0^t\u ^*(s)\d s+\e_0&\text{on}\,\,\Gamma\times (0,T],\\
            (2\mu_1\mathbb{D}(\e ^*) +\mu_2(\nabla\cdot \e^* ) \mathbb{I})\n =\mathbf{0}&\text{on}\,\,\Gamma_s\times (0,T].
		\end{cases}.
        \end{equation}
    \end{itemize}
	Applying the solution operators defined in Section \ref{sec:solution_operators}, we can express the solution to these sub-problems formally as:
\begin{align}\label{eq:u_in}
		\begin{cases}
			&(\u^*,p^*)=N_\Gamma \e^* +D_{\Gamma_f}\u_{in}+F_1\f ^*;\\
			&\e^*=D_{\Gamma}(\int_{0}^{t}\u^* (s) \d s+\e_0)+F_2\h^*.
		\end{cases}
	\end{align}
Substituting the expression for $\bm{\eta}^*$ from the second equation into the first, we obtain a closed equation for the fluid velocity $\mathbf{u}^*$:
	\begin{align*}\label{eq:int_uin}(\u^*,p^*)&=N_{\Gamma}D_\Gamma\int_{0}^{t}\u^* (s) \d s+N_\Gamma D_{\Gamma}\e_0+D_{\Gamma_f}\u_{in}+N_\Gamma F_2\h^*+F_1\f ^*,\\
\u^*&=(N_{\Gamma})_1D_\Gamma\int_{0}^{t}\u^* (s) \d s+(N_\Gamma)_1 D_{\Gamma}\e_0+(D_{\Gamma_f})_1\u_{in}+(N_{\Gamma})_1 F_2\h^*+(F_1)_1\f ^*.
	\end{align*}
    Here, the subscript $(\cdot)_1$ denotes the first component of the solution pair returned by the operators.
        
	Since, by Lemma \ref{lem:solution_operator},  the operators satisfy  $N_{\Gamma}\in \mathcal{L}(\H^{2,2}_{\bbeta_s}(\Omega_s);\H^{2,2}_{\bbeta_f}(\Omega_f)\times H^{1,1}_{\bbeta_f}(\Omega_f))$ and $D_\Gamma\in \mathcal{L}(\H^{2,2}_{\bbeta_f}(\Omega_f);\H^{2,2}_{\bbeta_s}(\Omega_s))$, it follows that their composition $(N_\Gamma)_1D_{\Gamma}$ is a bounded linear operator on $\H^{2,2}_{\bbeta_f}(\Omega_f)$. 

    Consequently, by the standard theory of semigroups \cite[Theorem 1.2, Corollary 1.4]{pazy2012semigroups}, the bounded linear operator $(N_\Gamma)_1 D_{\Gamma}$ generates a uniformly continuous (and hence analytic) semigroup ${T(t)}$ on $\mathbf{H}^{2,2}_{\bm{\beta}_f}(\Omega_f)$. This semigroup is given explicitly by the exponential series:
\begin{equation}
T(t):=e^{t(N_\Gamma)_1D_{\Gamma}}=\sum\limits_{k=0}^{\infty }\frac{(t(N_\Gamma)_1D_{\Gamma})^k}{k!}\in \mathcal{L}(\H^{2,2}_{\bbeta_f}(\Omega_f)),
\end{equation}
and satisfies the norm bound
\begin{equation}\label{eq:norm_nound_T}
\|T(t)\|_{\mathcal{L}(\mathbf{H}^{2,2}_{\bm{\beta}_f}(\Omega_f))} \leq e^{C t}
\end{equation}
for some constant $C > 0$ and all $t \geq 0$.

	To utilize this structure, we define an auxiliary variable $\mathbf{v}^*(t) = \int_{0}^{t} \mathbf{u}^*(s) ds$. This definition transforms the integral equation for $\mathbf{u}^*$ into an evolution equation for $\mathbf{v}^*$: 
	\begin{align}\label{eq:v*}
		\partial_t\vv^*&=(N_\Gamma)_1D_{\Gamma}\vv^*(t)+(N_\Gamma)_1D_{\Gamma}\e_0+(D_{\Gamma_f})_1\u_{in}+(N_\Gamma)_1 F_2\h^*+(F_1)_1\f ^*\notag\\
		&=(N_\Gamma)_1D_{\Gamma}\vv^*(t)+\w^*(t),
	\end{align}
	where the inhomogeneous source term is defined as $ \w^*(t):=(N_\Gamma)_1D_{\Gamma}\e_0+(D_{\Gamma_f})_1\u_{in}+(N_\Gamma)_1 F_2\h^*+(F_1)_1\f ^*$. By Lemma \ref{lem:solution_operator}, we have $\mathbf{w}^*(t) \in \mathbf{H}^{2,2}_{\bm{\beta}_f}(\Omega_f)$ for all $t \in [0,T]$, and it satisfies the estimate:
    \begin{align}
        \label{eq:estimate_w*}
        \|\w(t)^*\|_{\mathbf{H}^{2,2}_{\bm{\beta}_f}(\Omega_f)}
        &\leq C \big(\|\e_0\|_{\H^{2,2}_{\bbeta_s}(\Omega_s)}+\|\u_{in}\|_{L^\infty(0,T;\H^{3/2,3/2}_{\bbeta_f}(\Gamma_f))}+\|-\nabla\cdot (2\mathbb{D}(\mathbf{u_0}) - q_0 \mathbb{I})\|_{\L^2_{\bbeta_f}(\Omega_f)}\notag\\
        &\quad +\|-\nabla\cdot (2\mu_1\mathbb{D}(\e_0 ) +\mu_2(\nabla\cdot \e_0 ) \mathbb{I})\|_{\L^2_{\bbeta_s}(\Omega_s)}\big)\notag\\
        &\leq C \big(\|\e_0\|_{\H^{2,2}_{\bbeta_s}(\Omega_s)}+\|\u_{in}\|_{L^\infty(0,T;\H^{3/2,3/2}_{\bbeta_f}(\Gamma_f))}+\|-\nabla\cdot (2\mathbb{D}(\mathbf{u_0}) - q_0 \mathbb{I})\|_{\L^2(\Omega_f)}\notag\\
        &\quad +\|-\nabla\cdot (2\mu_1\mathbb{D}(\e_0 ) +\mu_2(\nabla\cdot \e_0 ) \mathbb{I})\|_{\L^2(\Omega_s)}\big)\notag\\
        &\leq C\big(\|\u_{in}\|_{L^\infty(0,T;\H^{3/2,3/2}_{\bbeta_f}(\Gamma_f))}+\|(\u_0,\e_0)\|_{\X_{\bbeta_f,\bbeta_s}}\big)
    \end{align}
     where we have used the embeddings $\L^2(\Omega_f)\subset \L^2_{\bbeta_f}(\Omega_f)$ and $\L^2(\Omega_s)\subset \L^2_{\bbeta_s}(\Omega_s)$.
    
    The unique solution to the evolution equation \eqref{eq:v*} is therefore given by the variation of constants formula:
	\begin{align*}
		\vv^*(t)&=\int_{0}^{t}T(t-s)\w^*(s)\d s\quad \text{with}\quad \vv^*\in C(0,T;\H^{2,2}_{\bbeta_f}(\Omega_f))
	\end{align*}
    Applying the semigroup bound \eqref{eq:norm_nound_T} and estimate \eqref{eq:estimate_w*} yields the following:
    \begin{align}\label{eq:estimate_v*}
        \|\vv^*(t)\|_{\mathbf{H}^{2,2}_{\bm{\beta}_f}(\Omega_1)}&\leq C\int_0^te^{C(t-s)} \big(\|\u_{in}\|_{L^\infty(0,T;\H^{3/2,3/2}_{\bbeta_f}(\Gamma_f))}+\|(\u_0,\e_0)\|_{\X_{\bbeta_f,\bbeta_s}}\big)\d s\notag\\
        &\leq Ce^{CT} \big(\|\u_{in}\|_{L^\infty(0,T;\H^{3/2,3/2}_{\bbeta_f}(\Gamma_f))}+\|(\u_0,\e_0)\|_{\X_{\bbeta_f,\bbeta_s}}\big).
    \end{align}
    Thus, the solution to the original system \eqref{eq:pde_*}--\eqref{eq:pde_*_IC} or equivalently, \eqref{eq:u_in}, is recovered via:
	\begin{equation}\label{eq:solution*}
		\begin{cases}
			&\e^*=D_{\Gamma}(\vv^* +\e_0)+F_2\h^*,\\
			&(\u^*,p^*)=N_\Gamma \e^* +D_{\Gamma_f}\u_{in}+F_1\f^*.
		\end{cases}
	\end{equation}
	Lemma \ref{lem:solution_operator} and the estimate \eqref{eq:estimate_v*} imply that $(\u^*(t),p^*(t),\e^*(t))\in \mathbf{H}^{2,2}_{\bm{\beta}_f}(\Omega_f)\times H^{1,1}_{\bm{\beta}_f}(\Omega_f)\times\mathbf{H}^{2,2}_{\bm{\beta}_s}(\Omega_s) $ with the bound:
    \begin{align}\label{eq:estimate_u*}
        &\|\u^*(t)\|_{\mathbf{H}^{2,2}_{\bm{\beta}_f}(\Omega_f)}+\|p^*(t)\|_{H^{1,1}_{\bm{\beta}_f}(\Omega_f)}+\|\e^*(t)\|_{\mathbf{H}^{2,2}_{\bm{\beta}_s}(\Omega_s)}\notag\\
        &\leq Ce^{CT}\big(\|\u_{in}\|_{L^\infty(0,T;\H^{3/2,3/2}_{\bbeta_f}(\Gamma_f))}+\|(\u_0,\e_0)\|_{\mathrm{X}_{\bbeta_f,\bbeta_s}}\big).
    \end{align}
    Given the additional regularity assumptions 
    $$
    \partial_t \u_{in}\in L^\infty(0,T;\H^{1/2}(\Gamma_f)) \quad\mbox{and}\quad \partial_{tt}\u_{in}\in L^2(0,T;\H^{1/2,1/2}_{\bbeta_f,00}(\Gamma_{f};\Omega_f)^\prime ) ,
    $$ 
    we can formally differentiate the system \eqref{eq:solution*} with respect to time. This yields the following system for the time derivatives:
	\begin{equation}\label{eq:u_t_eta_tt}
		\begin{cases}
			&\partial_t \e^*=D_{\Gamma} \u^*,\\
			&\partial_{t} \u^*=(N_\Gamma)_1 \partial_t\e^* +(D_{\Gamma_f})_1\partial_t\u_{in},\\
			&\partial_{tt} \e^*=D_{\Gamma}\partial_{t} \u^*,\\
			&\partial_{tt} \u^*=(N_\Gamma)_1 \partial_{tt}\e^* +(D_{\Gamma_f})_1\partial_{tt}\u_{in},\\
			&\partial_{ttt} \e^*=D_{\Gamma}\partial_{tt} \u^*.\\
		\end{cases}
	\end{equation}
    
    By Lemma \ref{lem:solution_operator} and the estimate for $\u ^*$ from \eqref{eq:estimate_u*}, we obtain $\partial_t\e^*(t) \in \mathbf{H}^{2,2}_{\bm{\beta}_s}(\Omega_s)$ for all $t \in [0,T]$ with
    \begin{align}\label{eq:estimate_e*_t}
        &\|\partial_t\e ^*(t)\|_{\mathbf{H}^{2,2}_{\bm{\beta}_s}(\Omega_s)}
        \leq Ce^{CT} \big(\|\u_{in}\|_{L^\infty(0,T;\H^{3/2,3/2}_{\bbeta_f}(\Gamma_f))}+\|(\u_0,\e_0)\|_{\X_{\bbeta_f,\bbeta_s}}\big).
    \end{align}
    
    Since $\partial_t \u_{in}\in L^\infty(0,T;\H^{1/2}(\Gamma_f))$ and $D_{\Gamma_f}\in \mathcal{L}(\H^{1/2}(\Gamma_f);\H^1(\Omega_f)\times L^2(\Omega_f))$ by \eqref{bounded_map_H^1}, it follows that $(D_{\Gamma_f})_1\partial_t\u_{in}(t)\in \H^1(\Omega_f)$ for all $t\in [0,T]$ with 
    \begin{equation}
        \|(D_{\Gamma_f})_1\partial_t\u_{in}(t)\|_{\H^1(\Omega_f)}\leq C \|\partial_t \u_{in}\|_{L^\infty(0,T;\H^{1/2}(\Gamma_f))}.
    \end{equation}
    Combining Lemma \ref{lem:solution_operator} with \eqref{eq:estimate_e*_t}, we deduce that $\partial_t \u^*\in L^\infty(0,T;\H^1(\Omega_f))$ and
    \begin{align}
        \|\partial_t \u^*(t)\|_{\H^1(\Omega_f)}&\leq \|(N_\Gamma)_1 \partial_t\e^*(t)\|_{\mathbf{H}^{2,2}_{\bm{\beta}_f}(\Omega_f)}+\|(D_{\Gamma_f})_1\partial_t\u_{in}(t)\|_{\H^1(\Omega_f)}\notag\\
        &\leq C(\|\partial_t\e ^*(t)\|_{\mathbf{H}^{2,2}_{\bm{\beta}_s}(\Omega_s)}+\|(D_{\Gamma_f})_1\partial_t\u_{in}(t)\|_{\H^1(\Omega_f)})\notag\\
        &\leq Ce^{CT} \big(\|\u_{in}\|_{L^\infty(0,T;\H^{3/2,3/2}_{\bbeta_f}(\Gamma_f))}+\|\partial_t \u_{in}\|_{L^\infty(0,T;\H^{1/2}(\Gamma_f))}+\|(\u_0,\e_0)\|_{\X_{\bbeta_f,\bbeta_s}}\big).\label{eq:estimate_u*t_H1}
    \end{align}
    
    Since $D_\Gamma \in  \mathcal{L}(\H^{1/2}(\Gamma);\H^1(\Omega_s))$ by \eqref{bounded_map_H^1}, we obtain that 
    \begin{align}
        \|\partial_{tt} \e^*(t)\|_{\H^1(\Omega_s)}&\leq C \|\partial_t \u^*(t)\|_{\H^1(\Omega_f)}\notag\\
        &\leq Ce^{CT} \big(\|\u_{in}\|_{L^\infty(0,T;\H^{3/2,3/2}_{\bbeta_f}(\Gamma_f))}+\|\partial_t \u_{in}\|_{L^\infty(0,T;\H^{1/2}(\Gamma_f))}+\|(\u_0,\e_0)\|_{\X_{\bbeta_f,\bbeta_s}}\big).\label{eq:estimate_eta*_tt}
    \end{align}
Since $\nabla\cdot \si_s(\partial_{tt} \e^*)=-\partial_{tt}\h^*=\bm{0}$ in $\Omega_s$, we have  $\si_s(\partial_{tt} \e^*(t)) \in {\mathbb{H}(\mathrm{div},\Omega_s)}$. Then, by Lemma \ref{lem:gamma_n}, the normal trace 
$\si_s(\partial_{tt} \e^*(t)) \n\in \H^{1/2}_{00}(\Gamma;\Omega_s)^\prime  $ satisfies
\begin{equation}
    \|\si_s(\partial_{tt} \e^*(t))\n\|_{\H_{00}^{1/2}(\Gamma;\Omega_s)^\prime }\leq C \|\si_s(\partial_{tt} \e^*(t))\|_{{\mathbb{H}(\mathrm{div},\Omega_s)}}\leq C \|\partial_{tt} \e^*(t)\|_{\H^1(\Omega_s)}.
\end{equation}
Applying the operator $ \tilde{N}_{\Gamma}\in \mathcal{L}(\H_{00}^{1/2}(\Gamma;\Omega_f)^\prime ;\H^1(\Omega_f)\times L^2(\Omega_f))$ by \eqref{bounded_map_H^1}, we obtain  $(N_\Gamma)_1 \partial_{tt}\e^*(t)=(\tilde{N}_{\Gamma})_1(\si_s(\partial_{tt} \e^*(t))\n)\in \H^1(\Omega_f)$ with 
    \begin{align}
        &\|(N_\Gamma)_1 \partial_{tt}\e^*(t)\|_{\H^1(\Omega_f)} \notag\\
        &\leq C \|\si_s(\partial_{tt} \e^*(t))\cdot \n\|_{\H_{00}^{1/2}(\Gamma;\Omega_s)^\prime }\notag \\
        &\leq Ce^{CT} \big(\|\u_{in}\|_{L^\infty(0,T;\H^{3/2,3/2}_{\bbeta_f}(\Gamma_f))}+\|\partial_t \u_{in}\|_{L^\infty(0,T;\H^{1/2}(\Gamma_f))}+\|(\u_0,\e_0)\|_{\X_{\bbeta_f,\bbeta_s}}\big),\label{eq:estimate_e*_tt}
    \end{align}
    where we have used
    \begin{equation}
        \H_{00}^{1/2}(\Gamma;\Omega_f)^\prime =\H_{00}^{1/2}(\Gamma;\Omega_s)^\prime.
    \end{equation}
    
Using $(D_{\Gamma_f})_1\in \mathcal{L}(\H^{1/2,1/2}_{\bbeta_f,00}(\Gamma_f;\Omega_f)^\prime ;\L^2(\Omega_f))$ from Lemma \ref{lem:transition_method} and $$\partial_{tt}\u_{in}\in L^2(0,T;\H^{1/2,1/2}_{\bbeta_f,00}(\Gamma_{f};\Omega_f)^\prime ) ,$$ we derive the estimate:
    \begin{align}
        \|\partial_{tt} \u^*\|_{L^2(0,T;\L^2(\Omega_f))}^2
        &\leq 2\int_0^T \|(N_\Gamma)_1 \partial_{tt}\e^*(t)\|_{\L^2(\Omega_f)}^2+\|(D_{\Gamma_f})_1\partial_{tt}\u_{in}(t)\|_{\L^2(\Omega_f)}^2\d t\notag\\
        &\leq C\int_0^T \|(N_\Gamma)_1 \partial_{tt}\e^*(t)\|_{\H^1(\Omega_f)}^2+\|\partial_{tt}\u_{in}(t)\|_{\H^{1/2,1/2}_{\bbeta_f,00}(\Gamma_{f};\Omega_f)^\prime }^2\d t\notag\\
        &\leq Ce^{CT} \big(\|\u_{in}\|^2_{L^\infty(0,T;\H^{3/2,3/2}_{\bbeta_f}(\Gamma_f))}+\|\partial_t \u_{in}\|^2_{L^\infty(0,T;\H^{1/2}(\Gamma_f))}\notag\\
        &\quad +\|\partial_{tt} \u_{in}\|^2_{L^2(0,T;\H^{1/2,1/2}_{\bbeta_f,00}(\Gamma_{f};\Omega_f)^\prime )}+\|(\u_0,\e_0)\|^2_{\X_{\bbeta_f,\bbeta_s}}\big)\quad  (\text{using \eqref{eq:estimate_e*_tt}}).\label{eq:estimate_u*_tt}
    \end{align}
Using the relation in  \eqref{eq:u_t_eta_tt}, we  write $\partial_{ttt} \e^*=D_{\Gamma}\partial_{tt}\u^*=D_{\Gamma}(N_\Gamma)_1 \partial_{tt}\e^*+D_{\Gamma}(D_{\Gamma_f})_1\partial_{tt}\u_{in}$. Then 
\begin{align}
    \|\partial_{ttt} \e^*\|_{L^2(0,T;\L^2(\Omega_s))}^2
        &\leq 2\int_0^T \|D_{\Gamma}(N_\Gamma)_1 \partial_{tt}\e^*(t)\|_{\L^2(\Omega_s)}^2+\|D_{\Gamma}(D_{\Gamma_f})_1\partial_{tt}\u_{in}(t)\|_{\L^2(\Omega_s)}^2\d t\notag\\
        &= \int_0^T \|D_{\Gamma}(N_\Gamma)_1 \partial_{tt}\e^*(t)\|_{\L^2(\Omega_s)}^2+\|D_{\Gamma}\gamma_\Gamma[(D_{\Gamma_f})_1\partial_{tt}\u_{in}(t)]\|_{\L^2(\Omega_s)}^2\d t\quad \notag\\
        &\hspace{120pt}(\text{using Lemma \ref{lem:trace_L2}})\notag\\
        &\leq C \int_0^T \|(N_\Gamma)_1 \partial_{tt}\e^*(t)\|_{\H^1(\Omega_s)}^2+\|\partial_{tt}\u_{in}(t)\|_{\H^{1/2,1/2}_{\bbeta_f,00}(\Gamma_{f};\Omega_f)^\prime}^2\d t\quad \notag\\
        &\hspace{120pt}(\text{using \eqref{bounded_map_H^1},  Lemma \ref{lem:transition_method} and Lemma  \ref{lem:trace_L2}})\notag\\
        &\leq Ce^{CT} \big(\|\u_{in}\|^2_{L^\infty(0,T;\H^{3/2,3/2}_{\bbeta_f}(\Gamma_f))}+\|\partial_t \u_{in}\|^2_{L^\infty(0,T;\H^{1/2}(\Gamma_f))}\notag\\
        &\quad +\|\partial_{tt} \u_{in}\|^2_{L^2(0,T;\H^{1/2,1/2}_{\bbeta_f,00}(\Gamma_{f};\Omega_f)^\prime )}+\|(\u_0,\e_0)\|^2_{\X_{\bbeta_f,\bbeta_s}}\big)\quad  (\text{using \eqref{eq:estimate_e*_tt}}).\label{eq:estimate_eta*_ttt}
\end{align}

    From \eqref{eq:estimate_u*t_H1}--\eqref{eq:estimate_eta*_ttt}, we conclude that
    \begin{equation}
        \partial_t\u^*\in H^1(0,T;\L^2(\Omega_f)),\quad \partial_{tt}\e^*\in H^1(0,T;\L^2(\Omega_s)).
    \end{equation}
    with the estimate
    \begin{align}
        &\|\partial_t \u^*\|_{H^1(0,T;\L^2(\Omega_f))}+\|\partial_{tt}\e^*\|_{H^1(0,T;\L^2(\Omega_s))} \notag\\
        &\leq Ce^{CT} \big(\|\u_{in}\|_{L^\infty(0,T;\H^{3/2,3/2}_{\bbeta_f}(\Gamma_f))}+\|\partial_t \u_{in}\|_{L^\infty(0,T;\H^{1/2}(\Gamma_f))}\notag\\
        &\quad +\|\partial_{tt} \u_{in}\|_{L^2(0,T;\H^{1/2,1/2}_{\bbeta_f,00}(\Gamma_{f};\Omega_f)^\prime )}+\|(\u_0,\e_0)\|_{\X_{\bbeta_f,\bbeta_s}}\big).\label{eq:estimate_H^1_ut_etatt}
    \end{align}
   
    Finally, using the uniqueness of the solution and the initial compatibility conditions \eqref{eq:initial_compatibility}, we verify consistency at $t=0$:
    \begin{equation}
    (\u^*(0),p^*(0))=N_\Gamma \e_0 +D_{\Gamma_f}\u_{in}(0)+F_1\f^*(0)=(\u_0,q_0)
    \end{equation}
    
	Let $(\tilde{\u},\tilde{p},\tilde{\e})=(\u-\u^*,p-p^*,\e-\e^*)$ define the difference between the solution of the original problem \eqref{eq:stoke_elasticity_t}-\eqref{con:initial}  and the one constructed in the previous step. Subtracting the systems  satisfied by $ (\u,p,\e)$ and  $(\u^*,p^*,\e^*)$, we find that $(\tilde{\mathbf{u}}, \tilde{p}, \tilde{\bm{\eta}})$ satisfies the following homogeneous system for $t \in (0, T]$:
		\begin{align}\label{eq:new_dirichlet_0}
		\text{PDEs}&\begin{cases}
			\partial_t \tilde{\u}-\nabla\cdot (2\mathbb{D}(\tilde{\u}) -\tilde{p} \mathbb{I})=\tilde{\f} &\text{in}\,\,\Omega_f\times (0,T],\\
			\nabla\cdot \tilde{\u}=0 &\text{in}\,\,\Omega_f\times (0,T],\\
			-\nabla\cdot (2\mu_1\mathbb{D}(\tilde{\e} ) +\mu_2(\nabla\cdot \tilde{\e} ) \mathbb{I})=\tilde{\h}&\text{in} \,\,
			\Omega_s\times (0,T],
		\end{cases}\\
		\text{B.C.}&\begin{cases}
			\tilde{\u}=\mathbf{0}&\text{on}\,\,\Gamma_f\times (0,T],\\
			\partial_t\tilde{\e}=\tilde{\u}&\text{on}\,\,\Gamma\times (0,T],\\
			(2\mathbb{D}(\tilde{\u}) -\tilde{p} \mathbb{I})\mathbf{n}=(2\mu_1\mathbb{D}(\tilde{\e} ) +\mu_2(\nabla\cdot \tilde{\e} ) \mathbb{I})\n &\text{on}\,\,\Gamma\times (0,T],\\
			(2\mu_1\mathbb{D}(\tilde{\e} ) +\mu_2(\nabla\cdot \tilde{\e} ) \mathbb{I})\n =\mathbf{0}&\text{on}\,\,\Gamma_s\times (0,T],\label{eq:new_BC}
		\end{cases}\\
		\text{I.C.} &(\tilde{\u } ,\tilde{\e }  ,\partial_t\tilde{\e } )|_{t=0}=(\tilde{\u}_0,\tilde{\e}_0,\tilde{\e}_1).\label{eq:new_initial}
	\end{align}
	The new forcing terms and initial data are given by:
	\begin{equation}
		\begin{cases}
			\tilde{\f}&=\f-\f^*-\partial_t\u^*\in H^1(0,T;\L^2(\Omega_f)),\\
			\tilde{\h}&=\h-\h^*-\partial_{tt}\e^*\in H^1(0,T;\L^2(\Omega_s)),\\
			\tilde{\u}_0&=\u _0-\u^*(0)=\mathbf{0},\\
			\tilde{\e}_0&=\e _0-\e^*(0)=\mathbf{0},\\
			\tilde{\e}_1&=\e _1-\partial_t\e^*(0)\in \H^1(\Omega_s).\\
		\end{cases}
	\end{equation}
	Crucially, this reduced system enjoys homogeneous initial conditions $(\tilde{\u}_0,\tilde{\e}_0)=(0,0)$, a homogeneous Dirichlet boundary condition on $\Gamma_f$, and a homogeneous Neumann boundary condition on $\Gamma_s$. Existence and uniqueness of a solution $(\tilde{\mathbf{u}}, \tilde{p}, \tilde{\bm{\eta}})$ follow by arguments analogous to those in \cite[Theorem 2.3]{du2004semidiscrete}. Applying the same  method yields the estimate
		\begin{align}
			&\|\tilde{\u }(t)\|_{\L^2(\Omega_f)}+\|\tilde{\u }\|_{L^2(0,T;\H^1(\Omega_f)}+\|\tilde{\e }(t)\|_{\H^1(\Omega_s)}+\|\tilde{p}\|_{L^2(0,T;L^2(\Omega_f))}\notag\\
			& +\|\partial_t\tilde{\u }(t)\|_{\L^2(\Omega_f)}+\|\partial_t\tilde{\u }\|_{L^2(0,T;\H^1(\Omega_f)}+\|\partial_t\tilde{\e }(t)\|_{\H^1(\Omega_s)}+\|\partial_{tt}\tilde{\e }(t)\|_{\L^2(\Omega_s)}\notag\\
            &\leq Ce^{CT}\big (\|\tilde{\f}\|_{H^1(0,T;\L^2(\Omega_f))}+\|\tilde{\h}\|_{H^1(0,T;\L^2(\Omega_s))}+\|\tilde{\e}_1\|_{\H^1(\Omega_s)}\big )\notag \\
			&\leq Ce^{CT}\big(\|\f\|_{H^1(0,T;\L^2(\Omega_f))}+\|\h\|_{H^1(0,T;\L^2(\Omega_s))}+\|\u_{in}\|_{L^\infty(0,T;\H^{3/2,3/2}_{\bbeta_f}(\Gamma_f))}+\|\partial_t \u_{in}\|_{L^\infty(0,T;\H^{1/2}(\Gamma_f))}\notag\\
        &\quad +\|\partial_{tt} \u_{in}\|_{L^2(0,T;\H^{1/2,1/2}_{\bbeta_f,00}(\Gamma_{f};\Omega_f)^\prime )}+\|\e_1\|_{\H^1(\Omega_s)}+\|(\u_0,\e_0)\|_{\X_{\bbeta_f,\bbeta_s}}\big)\quad \text{(\eqref{eq:estimate_e*_t} and \eqref{eq:estimate_H^1_ut_etatt} are used )}\notag\\
        &\leq Ce^{CT}S,
		\end{align}
        where \begin{align*}
S&=\|\f\|_{L^\infty(0,T;\L^2(\Omega_f))}+\|\partial_t\f\|_{L^2(0,T;\L^2(\Omega_f))}+\|\h\|_{H^1(0,T;\L^2(\Omega_s))}+\|\u_{in}\|_{L^\infty(0,T;\H^{3/2,3/2}_{\bbeta_f}(\Gamma_f))}\notag\\
        &\quad +\|\partial_t \u_{in}\|_{L^\infty(0,T;\H^{1/2}(\Gamma_f))}+\|\partial_{tt} \u_{in}\|_{L^2(0,T;\H^{1/2,1/2}_{\bbeta_f,00}(\Gamma_{f};\Omega_f)^\prime )}+\|\e_1\|_{\H^1(\Omega_s)}+\|(\u_0,\e_0)\|_{\X_{\bbeta_f,\bbeta_s}}.
        \end{align*}
        Combining this with the estimates \eqref{eq:estimate_u*}, \eqref{eq:estimate_e*_t}, \eqref{eq:estimate_u*t_H1}, \eqref{eq:estimate_eta*_tt}  for $(\mathbf{u}^*, p^*, \bm{\eta}^*)$, we conclude that 
        \begin{align}
			&\|\u (t)\|_{\L^2(\Omega_f)}+\|\u \|_{L^2(0,T;\H^1(\Omega_f)}+\|\e(t)\|_{\H^1(\Omega_s)}+\|p\|_{L^2(0,T;L^2(\Omega_f))}\notag\\
			& +\|\partial_t\u(t)\|_{\L^2(\Omega_f)}+\|\partial_t\u\|_{L^2(0,T;\H^1(\Omega_f)}+\|\partial_t\e(t)\|_{\H^1(\Omega_s)}+\|\partial_{tt}\e(t)\|_{\L^2(\Omega_s)}\notag\\
        &\leq Ce^{CT}S.
		\end{align}
        
       \medskip
\textbf{Step 2: Derivation of $\H^{2,2}_{\bbeta}$-regularity estimates.}

    From the elasticity equation in \eqref{eq:stoke_elasticity_t} at $t=0$,
    \begin{equation}
        \h (0)=\partial_{tt}\e (0)-\nabla\cdot (2\mu_1\mathbb{D}(\e_0 ) +\mu_2(\nabla\cdot \e_0 ) \mathbb{I}).
    \end{equation}
Applying the triangle and Cauchy–Schwarz inequalities yields:
    \begin{align}
        \|\h(t)\|_{\L^2(\Omega_s)}=&\|\h(0)+\int_0^t\partial_t\h(s)\d s\|_{\L^2(\Omega_s)}\notag\\
        \leq&  \|\h(0)\|_{\L^2(\Omega
        _s)}+t^{1/2}\|\partial_t\h\|_{L^2(0,T;\L^2(\Omega
        _s)}\notag\\
        \leq&\|\partial_{tt}\e (0)\|_{\L^2(\Omega_s)}+\|\nabla\cdot (2\mu_1\mathbb{D}(\e_0 ) +\mu_2(\nabla\cdot \e_0 )\|_{\L^2(\Omega_s)}+t^{1/2}\|\partial_t\h\|_{L^2(0,T;\L^2(\Omega
        _s)}\notag\\
        \leq& Ce^{CT}S.
    \end{align}
  Consequently,
    \begin{equation}
        \f-\partial_t \u \in L^\infty(0,T;\L^2(\Omega
        _f)),\quad \h-\partial_{tt} \e \in L^\infty(0,T;\L^2(\Omega
        _s)),
    \end{equation}
    and the following bound holds:
    \begin{align}\label{eq:estimates_f_u}
        &\|\f-\partial_t\u  \|_{L^\infty(0,T;\L^2(\Omega
        _f))}+\|\h-\partial_{tt} \e\|_{L^\infty(0,T;\L^2(\Omega
        _s))}\leq Ce^{CT}S.
    \end{align}
	
    We begin by reformulating the original problem \eqref{eq:stoke_elasticity_t}--\eqref{con:initial}. The solution $(\mathbf{u}, p, \bm{\eta})$ satisfies the system:
	\begin{align}
		\text{PDEs}&\begin{cases}
			-\nabla\cdot (2\mathbb{D}(\mathbf{u}) - p \mathbb{I})=\f-\partial_t\u  &\text{in}\,\,\Omega_f\times (0,T],\\
			\nabla\cdot \mathbf{u}=0 &\text{in}\,\,\Omega_f\times (0,T],\\
			-\nabla\cdot (2\mu_1\mathbb{D}(\e ) +\mu_2(\nabla\cdot \e ) \mathbb{I})=\h-\partial_{tt}\e &\text{in} \,\,
			\Omega_s\times (0,T],
		\end{cases}\label{eq:pde_t_tt}\\
		\text{B.C.}&\begin{cases}
			\u=\u_{in}&\text{on}\,\,\Gamma_f\times (0,T],\\
			\partial_t\bm{\eta}=\mathbf{u}&\text{on}\,\,\Gamma\times (0,T],\\
			(2\mathbb{D}(\mathbf{u}) - p \mathbb{I})\mathbf{n}=(2\mu_1\mathbb{D}(\e ) +\mu_2(\nabla\cdot \e ) \mathbb{I})\n &\text{on}\,\,\Gamma\times (0,T],\\
			(2\mu_1\mathbb{D}(\e ) +\mu_2(\nabla\cdot \e ) \mathbb{I})\n=\mathbf{0}&\text{on}\,\,\Gamma_s\times (0,T],
		\end{cases}\\
		\text{I.C.} &({\u } ,{\e }  ,\partial_t{\e } )|_{t=0}=({\u}_0,{\e}_0,{\e}_1).
	\end{align}
   This system can be decoupled into two subproblems that are coupled through the interface conditions:
	\begin{align}
		\text{(Fluid Subproblem)} &\begin{cases}
			-\nabla\cdot (2\mathbb{D}(\mathbf{u}) - p \mathbb{I})=\f-\partial_t\u  &\text{in}\,\,\Omega_f\times (0,T],\\
			\nabla\cdot \mathbf{u}=0 &\text{in}\,\,\Omega_f\times (0,T],\\
			\u=\u_{in}&\text{on}\,\,\Gamma_f\times (0,T],\\
			(2\mathbb{D}(\mathbf{u}) - p \mathbb{I})\mathbf{n}=(2\mu_1\mathbb{D}(\e ) +\mu_2(\nabla\cdot \e ) \mathbb{I})\n &\text{on}\,\,\Gamma\times (0,T],
		\end{cases}\\
		\text{(Elasticity Subproblem)}&\begin{cases}
			-\nabla\cdot (2\mu_1\mathbb{D}(\e ) +\mu_2(\nabla\cdot \e ) \mathbb{I})=\h-\partial_{tt}\e &\text{in} \,\,
			\Omega_s\times (0,T],\\
			\e=\int_{0}^{t}\u(s)\d s+\e_0 &\text{on}\,\,\Gamma\times (0,T],\\
			(2\mu_1\mathbb{D}(\e ) +\ur (\nabla\cdot \e ) \mathbb{I})\n=\mathbf{0}&\text{on}\,\,\Gamma_s\times (0,T].
		\end{cases}
	\end{align}
    Applying the solution operators defined in Section \ref{sec:solution_operators}, we decouple the FSI system into two interconnected subproblems:
	\begin{align}
		\begin{cases}
			&(\u,p)=N_\Gamma \e +D_{\Gamma_f}\u_{in}+F_1(\f-\partial_t\u);\\
			&\e=D_{\Gamma}(\int_{0}^{t}\u (s) \d s+\e_0)+F_2(\h-\partial_{tt}\e).
		\end{cases}
	\end{align}
    Substituting the expression for $\bm{\eta}$ from the second equation into the first, we obtain a closed equation for the fluid velocity $\mathbf{u}$:
	\begin{align}
    (\u,p)&=N_{\Gamma}D_\Gamma\int_{0}^{t}\u (s) \d s+N_\Gamma D_{\Gamma}\e_0+D_{\Gamma_f}\u_{in}+N_\Gamma F_2(\h-\partial_{tt}\e)+F_1(\f-\partial_t\u),\notag\\
\u&=(N_{\Gamma})_1D_\Gamma\int_{0}^{t}\u (s) \d s+(N_\Gamma)_1 D_{\Gamma}\e_0+(D_{\Gamma_f})_1\u_{in}+(N_{\Gamma})_1 F_2(\h-\partial_{tt}\e)+(F_1)_1(\f-\partial_t\u).
	\end{align}
To analyze this equation, we define the auxiliary variable $\mathbf{v}(t) = \int_{0}^{t} \mathbf{u}(s) ds$. This transformation converts the integral equation for $\u $  into an evolution equation for $\mathbf{v}$:
	\begin{align}\label{eq:v_FSI}
		\partial_t\vv&=(N_\Gamma)_1D_{\Gamma}\vv(t)+\w^\#(t),
	\end{align}
	where $ \w^\#(t)=(N_\Gamma)_1D_{\Gamma}\e_0+(D_{\Gamma_f})_1\u_{in}+(N_\Gamma)_1 F_2(\h-\partial_{tt}\e)+(F_1)_1(\f-\partial_t\u)$.  By  Lemma \ref{lem:solution_operator} and \eqref{eq:estimates_f_u},  we have $\mathbf{w}^\#(t) \in \mathbf{H}^{2,2}_{\bm{\beta}_f}(\Omega_f)$ for every $t \in [0,T]$ and satisfies the estimate:
    \begin{align}\label{eq:estimate_w**}
        &\|\w^\#(t)\|_{\mathbf{H}^{2,2}_{\bm{\beta}_f}(\Omega_f)}\leq Ce^{CT}S.
    \end{align} 
    The unique solution to the evolution equation \eqref{eq:v_FSI} is therefore given by the variation of constants formula:
	\begin{align*}
		\vv(t)&=\int_{0}^{t}T(t-s)\w^\#(s)\d s\quad \text{with}\quad \vv\in C(0,T;\H^{2,2}_{\bbeta_f}(\Omega_f)),
	\end{align*}
    where ${T(t)}$ is the semigroup generated by $(N_\Gamma)_1 D_{\Gamma}$ which is bounded operator on $\H^{2,2}_{\bbeta_f}(\Omega_f)$.
    Applying the semigroup bound \eqref{eq:norm_nound_T} and the estimate \eqref{eq:estimate_w**} yields:
    \begin{align}\label{eq:estimate_v**}
        \|\vv(t)\|_{\mathbf{H}^{2,2}_{\bm{\beta}_f}(\Omega_f)}
        &\leq C\int_0^te^{C(t-s)} e^{CT}S 
        \leq Ce^{CT}S.
    \end{align}
    Thus, the solution to the original system \eqref{eq:stoke_elasticity_t}--\eqref{con:initial} is recovered by:
	\begin{equation}\label{eq:solution**}
		\begin{cases}
			&\e=D_{\Gamma}(\vv +\e_0)+F_2(\h-\partial_{tt}\e),\\
			&(\u,p)=N_\Gamma \e +D_{\Gamma_f}\u_{in}+F_1(\f -\partial_t\u ).
		\end{cases}
	\end{equation}
    Combining Lemma~\ref{lem:solution_operator} with the estimates \eqref{eq:estimate_v**} and \eqref{eq:estimates_f_u}, we obtain the higher regularity
    \begin{equation}
        (\u(t),p(t),\e(t))\in \mathbf{H}^{2,2}_{\bm{\beta}_f}(\Omega_f)\times H^{1,1}_{\bm{\beta}_f}(\Omega_f)\times\mathbf{H}^{2,2}_{\bm{\beta}_s}(\Omega_s), 
    \end{equation}
     with the quantitative bound:
    \begin{align}\label{eq:estimate_u**}
        &\|\u(t)\|_{\mathbf{H}^{2,2}_{\bm{\beta}_f}(\Omega_f)}+\|p(t)\|_{\mathbf{H}^{1,1}_{\bm{\beta}_f}(\Omega_f)}+\|\e(t)\|_{\mathbf{H}^{2,2}_{\bm{\beta}_s}(\Omega_s)}\leq Ce^{CT}S.
    \end{align}
    Using \eqref{eq:pde_t_tt}
and \eqref{eq:estimates_f_u}, we further obtain
    \begin{align}
        &\quad \|-\nabla\cdot (2\mathbb{D}(\mathbf{u}(t)) - p (t)\mathbb{I})\|_{\L^2(\Omega_f)}+\|-\nabla\cdot (2\mu_1\mathbb{D}(\e (t)) +\mu_2(\nabla\cdot \e(t) ) \mathbb{I})\|_{\L^2(\Omega_s)}\notag\\
        &=\|\f(t)-\partial_t\u(t) \|_{\L^2(\Omega_f)}+\|\h(t)-\partial_{tt}\e(t) \|_{\L^2(\Omega_s)}\notag\\
        &\leq Ce^{CT}S.\label{eq:estimate_u_p_L2}
    \end{align}
    Recall that $q=p-r$, where $r$  defined via \eqref{map:definition_M} and satisfies the estimate
    \begin{align}\label{esti_r}
        \|\nabla r(t)\|_{\L^2(\Omega_f)}
        &\leq C \|-\nabla\cdot (2\mathbb{D}(\mathbf{u}(t)) - p (t)\mathbb{I})\|_{\L^2(\Omega_f)}.
    \end{align}
    Consequently, 
    \begin{align}\label{estimate:Du-q_L^2}
        \|-\nabla\cdot (2\mathbb{D}(\mathbf{u}(t)) - q (t)\mathbb{I})\|_{\L^2(\Omega_f)}
        &\leq \|-\nabla\cdot (2\mathbb{D}(\mathbf{u}(t)) - p (t)\mathbb{I})\|_{\L^2(\Omega_f)}+\|\nabla r(t)\|_{\L^2(\Omega_f)}\notag\\
        &\leq Ce^{CT}S.
    \end{align}
    Finally, combining \eqref{eq:estimate_u**}--\eqref{estimate:Du-q_L^2} with the definition of the norm on $\X_{\bbeta_f,\bbeta_s}$, we obtain
    \begin{align}
        &\|(\u(t),\e(t))\|_{\X_{\bbeta_f,\bbeta_s}}\leq Ce^{CT}S.
    \end{align}
 \end{proof}

\section{Conclusion}\label{sec:conclusion}

In this work, we have established existence, uniqueness, and $\H^{2,2}_{\bbeta}$-regularity of solutions for a fluid--structure interaction (FSI) problem posed on polygonal domains with non-homogeneous Dirichlet boundary conditions. We addressed the regularity challenges arising from geometric singularities at corners where the interface meets the boundary by employing tailored weighted Sobolev spaces, $\mathbf{H}^{k,l}_{\boldsymbol{\beta}}(\Omega)$. 

Our analysis is constructive: we developed specialized solution operators that decouple the coupled FSI system into manageable subsystems, allowing precise local estimates and a rigorous characterization of singular behavior. The theoretical results presented here provide a foundation for designing optimal mesh refinement strategies to achieve improved approximation accuracy, thereby enhancing computational performance. Overall, this work offers a robust framework with potential applications in scientific computing, numerical analysis, and the simulation of FSI phenomena in non-smooth domains.


\section*{Acknowledgement}
This work was supported by the National Natural Science Foundation of China (Grant No. 12525111) and the Research Grants Council of Hong Kong (Grant No. PolyU/RFS2324-5S03).

\bibliographystyle{abbrv} 
\bibliography{sample}

\end{document}